\DeclareMathOperator{\Ric}{Ric}
\DeclareMathOperator{\spt}{spt}
\DeclareMathOperator{\Vol}{Vol}
\DeclareMathOperator{\Tr}{Tr}
\DeclareMathOperator{\Conv}{Conv}
\DeclareMathOperator{\Hess}{Hess}
\def\Z{\mathbb Z}
\newtheorem{theo}{Theorem}[]
\newtheorem{lemme}[theo]{Lemma}
\begin{document}

\title{Equidistribution of minimal hypersurfaces for generic metrics} 
\author{Fernando C. Marques, Andr\'e Neves and Antoine Song}
\address{Princeton University \\ Fine Hall \\ Princeton NJ 08544 \\ USA}
\email{coda@math.princeton.edu}
\address{University of Chicago \\ Department of Mathematics \\ Chicago IL 60637\\ USA /Imperial College London\\ Huxley Building \\ 180 Queen's Gate \\ London SW7 2RH \\ United Kingdom}
\email{aneves@uchicago.edu, a.neves@imperial.ac.uk}
\address{Princeton University \\ Fine Hall \\ Princeton NJ 08544 \\ USA}
\email{aysong@math.princeton.edu}
\thanks{The first author is partly supported by NSF-DMS-1509027 and NSF DMS-1311795. The second author is partly supported by NSF  DMS-1710846 and EPSRC Programme Grant EP/K00865X/1. The third author is supported by NSF-DMS-1509027.}

\maketitle

\begin{abstract}
For almost all Riemannian metrics (in the $C^\infty$ Baire sense) on a closed manifold $M^{n+1}$, $3\leq (n+1)\leq 7$, we prove that there is a sequence of  closed, smooth, embedded, connected minimal hypersurfaces that is equidistributed in $M$.  

This gives a quantitative version of the main result of  \cite{irie-marques-neves}, by Irie and the first two authors, that established density of minimal hypersurfaces for generic metrics. As in \cite{irie-marques-neves}, the main tool is the Weyl Law for the Volume Spectrum proven by Liokumovich and the first two authors in \cite{liokumovich-marques-neves}.
 
 \end{abstract}
 
 \section{Introduction}

In 1982, S. T. Yau (\cite{yau1}) conjectured that every closed Riemannian three-manifold contains infinitely many smooth, closed, immersed minimal surfaces. In \cite{irie-marques-neves}, Irie and the first two authors settled Yau's conjecture in the generic case by proving a much stronger property holds true:

\medskip

{\bf Theorem} (Irie, Marques, Neves, 2017): \textit{Let $M^{n+1}$ be a closed manifold of dimension $(n+1)$, with $3\leq (n+1)\leq 7$. Then for a $C^\infty$-generic Riemannian metric $g$ on $M$, the union of all closed, smooth, embedded minimal hypersurfaces is dense.}
\medskip

In our paper, we use the methods of \cite{irie-marques-neves} in a more quantitative way and prove an even stronger property: there is a sequence of closed, smooth, embedded, connected minimal hypersurfaces that is equidistributed in $M$.

\medskip

{\bf Main Theorem}: \textit{Let $M^{n+1}$ be a closed manifold of dimension $n+1$, with $3\leq (n+1)\leq 7$. Then for a $C^\infty$-generic Riemannian metric $g$ on $M$, there exists a sequence $\{\Sigma_j\}_{j\in \mathbb{N}}$ of closed, smooth, embedded, connected minimal hypersurfaces  that is equidistributed in $M$:  for any $f \in C^\infty(M)$ we have
\begin{equation}\label{equidistribution.function}
\lim_{q \rightarrow \infty} \frac{1}{\sum_{j=1}^{q} {\rm vol}_g(\Sigma_{j})}\sum_{j=1}^{q} \int_{\Sigma_{j}} f \, d\Sigma_{j}=\frac{1}{{\rm vol}_g M}\int_M f dM.
\end{equation}
Even more, for any symmetric $(0,2)$-tensor $h$ on $M$ we have:
\begin{equation}\label{equidistribution.tensor}
\lim_{q \rightarrow \infty} \frac{1}{\sum_{j=1}^{q} {\rm vol}_g(\Sigma_{j})}\sum_{j=1}^{q} \int_{\Sigma_{j}} \Tr_{\Sigma_{j}} (h) \, d\Sigma_{j}=\frac{1}{{\rm vol}_g M}\int_M \frac{n\Tr_M h}{n+1} dM.
\end{equation}
}
\medskip

Equidistribution theorems have an old history in fields like number theory, ergodic theory and harmonic analysis. Equidistribution of closed geodesics is known in some cases, like for compact hyperbolic manifolds (Bowen' 72 \cite{bowen} or Margulis \cite{margulis}, see also \cite{zelditch}). Equidistribution  of totally geodesic surfaces is a well-studied problem for hyperbolic $3$-manifolds (\cite{eskin-hee-oh},  \cite{mcmullen-mohammadi-oh}, \cite{mozes-shah}, \cite{ratner}, \cite{shah}). Our theorem is the first of its kind for the higher-dimensional setting of minimal surfaces in general manifolds.

{\bf Remark:} Yau's Conjecture has been fully resolved by the third author \cite{song-infinitely-many}. He  was able to localize the methods initially developed by the first two authors in \cite{marques-neves-infinitely}, and proved that any compact $(M^{n+1},g)$, $3\leq (n+1) \leq 7$, contains infinitely many smooth, embedded, closed minimal hypersurfaces. It would be interesting to know whether density and equidistribution of minimal hypersurfaces hold for all Riemannian metrics. 

\medskip

As in the Irie-Marques-Neves paper (\cite{irie-marques-neves}), the crucial tool in our proof is the Weyl law for the Volume Spectrum conjectured by Gromov (\cite{gromov}) and recently proven by the first two authors jointly with Liokumovich in \cite{liokumovich-marques-neves}:
\medskip

{\bf Weyl Law for the Volume Spectrum} (Liokumovich, Marques, Neves, 2016): {\it There exists a universal constant $a(n)>0$ such that for any compact Riemannian manifold $(M^{n+1},g)$ we have:
$$
\lim_{p\rightarrow \infty} \omega_p(M,g)p^{-\frac{1}{n+1}} = a(n) {\rm vol}(M,g)^\frac{n}{n+1}.
$$
}

The volume spectrum of a compact Riemannian manifold $(M^{n+1},g)$ is a nondecreasing sequence of numbers $\{\omega_p(M,g):p\in \mathbb{N}\}$ defined variationally by performing a min-max procedure for the area functional over  multiparameter sweepouts. The first estimates for these numbers were proven in fundamental papers by Gromov in the late 1980s \cite{gromov0} and by Guth \cite{guth} more recently.

Our proof also uses a transversality argument, based on the Structure Theorem of White (\cite{white2}, Theorem 2.1), that allows one to compute the derivative of the $p$-width as the derivative of the area of some minimal hypersurface.
We combine this information with appropriately chosen $N$-parameter deformations of the metric, for $N$ large, that generalize the one-parameter deformations of \cite{irie} and \cite{irie-marques-neves}. A key idea in the paper (that can be be deduced from Lemma \eqref{differential}) is that metrics which  are critical points of the functional $g\mapsto \omega_p(M,g)p^{-\frac{1}{n+1}}-a(n) {\rm vol}(M,g)^\frac{n}{n+1}$ when restricted to the $N$-parameter family of  deformations have  minimal hypersurfaces that obey some form of equidistribution. The fact that this functional is only Lipschitz continuous and thus not differentiable everywhere is a serious technical issue that the authors had to overcome.

We note that Property (\ref{equidistribution.function}) is equivalent to saying that 
$$
\frac{\sum_{j=1}^q \mu_{\Sigma_j}}{\sum_{j=1}^q \mu_{\Sigma_j}(M)} \rightarrow \frac{\mu}{\mu(M)}
$$
as measures, where $\mu_{\Sigma_j}=||\Sigma_j||$ is the Radon measure $\mu_{\Sigma_j}(U)={\rm vol}_g(\Sigma_j \cap U)$,  $U \subset M$, and $\mu=dv_g$ is the Riemannian volume measure of $(M,g)$.  Property (\ref{equidistribution.function}) follows from Property (\ref{equidistribution.tensor}) by choosing $h=f \cdot g$.

The dimensional restriction in the Main Theorem  is due to the fact that in higher dimensions min-max (even area-minimizing) minimal hypersurfaces can have singular sets. We use Almgren-Pitts theory (\cite{almgren-varifolds}, \cite{pitts}), which together with Schoen-Simon regularity (\cite{schoen-simon}) produces smooth minimal hypersurfaces when $3\leq (n+1) \leq 7$. We expect that the methods of this paper can be generalized to handle the higher-dimensional singular case.

The Main Theorem raises many interesting and exciting new questions: namely whether equidistribution holds in the Grassmanian bundle, whether the minimal hypersurfaces realizing the width are the ones that become equidistributed, or whether there are conditions  which ensure that the sequence of unit measures $\mu_{\Sigma_j}/\mu_{\Sigma_j}(M)$ converges to the normalized volume measure. Any progress related with these directions would be highly desirable.

\section{Preliminaries}

We suppose that $M$ is a closed manifold of dimension $3 \leq (n+1) \leq 7$. For each $2\leq q\leq \infty$, we denote by $\Gamma_q$ the space of all $C^q$ Riemannian metrics on $M$, endowed with the $C^q$ topology. Given $g \in \Gamma_q$, we let $\mathcal{V}(g)$ be the set of stationary integral varifolds in $(M,g)$ whose support is a closed, $C^2$,  embedded, minimal hypersurface. Hence $V \in \mathcal{V}(g)$ if and only if there exist a disjoint collection $\{\Sigma_1,\dots,\Sigma_s\}$ of closed, $C^2$,  embedded, connected minimal hypersurfaces in $(M,g)$ and integers $\{m_1, \dots, m_s\}\subset \mathbb{N}$ such that $V = m_1 \Sigma_1 + \cdots + m_s \Sigma_s$. By elliptic regularity, each $\Sigma_i$ is in fact of class $C^q$. The support of $V$ is denoted by $\spt(V)$ and is equal to $\cup_{i=1}^s \Sigma_i$, while $||V||$ denotes the Radon measure induced by $V$ on $M$.

We denote by $\mathcal{Z}_n(M;\Z_2)$ the space of modulo two $n$-dimensional flat chains $T$ in $M$  with $T=\partial U$ for some $(n+1)$-dimensional modulo two flat chain $U$ in $M$, endowed with the flat topology. This space is weakly homotopically equivalent to $\mathbb{RP}^\infty$ (see Section 4 of \cite{marques-neves-topology}). We denote by $\overline{\lambda}$ the generator of $H^1(\mathcal{Z}_n(M;\Z_2), \mathbb{Z}_2)=\mathbb{Z}_2$. The mass ($n$-dimensional volume) of $T$ is denoted by $M(T)$.

Let $X$ be a finite dimensional simplicial complex. A continuous map $\Phi:X\rightarrow \mathcal{Z}_n(M;\Z_2)$ is called a {\em  $p$-sweepout} if
$$
\Phi^*(\bar \lambda^p) \neq 0 \in H^p(X;\Z_2).
$$
We say $X$ is {\em $p$-admissible} if there exists  a $p$-sweepout $\Phi:X\rightarrow \mathcal{Z}_n(M;\Z_2)$  that has no concentration of mass, meaning 
$$\lim_{r\to 0} \sup\{M(\Phi(x) \cap B_r(p)):x\in X, p\in M\}=0.$$
The set of all $p$-sweepouts $\Phi$ that have no concentration of mass is denoted by $\mathcal P_p$. Note that two maps in  $\mathcal P_p$ can have different domains.

In \cite{marques-neves-infinitely}, the first two authors  defined
\medskip

{\bf Definition:} The {\em $p$-width of $(M,g)$}  is the number
$$\omega_p(M,g)=\inf_{\Phi \in \mathcal P_p}\sup\{M(\Phi(x)): x\in {\rm dmn}(\Phi)\},$$
where ${\rm dmn}(\Phi)$ is the domain of $\Phi$.

The next lemma gives that the normalized $p$-width $p^{-\frac{1}{(n+1)}}\omega_p(M,g)$ is a  Lipschitz function of the metric on sets of uniformly equivalent metrics, with a Lipschitz constant that does not depend on $p$.
\begin{lemme} \label{lipschitz}
Let  $\tilde{g}$ be a $C^2$ Riemannian metric on $M$, and let  $C_1<C_2$ be positive constants. Then there exists $K=K(\tilde{g},C_1,C_2)>0$ such that
$$
|p^{-\frac{1}{(n+1)}}\omega_p(M,g)-p^{-\frac{1}{(n+1)}}\omega_p(M,g')| \leq K \cdot |g-g'|_{\tilde{g}}
$$
for any $g,g'\in \{h\in \Gamma_2 ; C_1\tilde{g} \leq h \leq C_2 \tilde{g}\}$ and any $p \in \mathbb{N}$. 

\end{lemme}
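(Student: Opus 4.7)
The plan is to deduce the Lipschitz estimate from a pointwise mass comparison for flat chains, together with the fact that the normalized widths $p^{-1/(n+1)}\omega_p(M,g)$ are uniformly bounded in $p$ on the comparable class $\mathcal{U}=\{h\in\Gamma_2:C_1\tilde g\le h\le C_2\tilde g\}$.

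First, I would establish the following multiplicative comparison: there exists $C=C(\tilde g,C_1,C_2)>0$ such that for all $g,g'\in\mathcal{U}$ and all $T\in\mathcal{Z}_n(M;\Z_2)$,
\[
|M_{g'}(T)-M_g(T)|\le C\,|g-g'|_{\tilde g}\,M_g(T).
\]
Since $\mathcal{U}$ is convex, the segment $g_s=(1-s)g+sg'$ stays in $\mathcal{U}$. For a rectifiable $n$-set with approximate tangent plane $T_x$, the Hausdorff $n$-density factor is $\sqrt{\det(g_s|_{T_x})}$; differentiating gives $\tfrac12 \Tr_{g_s|_{T_x}}(h|_{T_x})\sqrt{\det(g_s|_{T_x})}$ with $h=g'-g$. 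Uniform equivalence of $g_s$ with $\tilde g$ bounds the trace factor by $C|h|_{\tilde g}$ and relates $\mathcal{H}^n_{g_s}$ to $\mathcal{H}^n_g$ within a fixed multiplicative constant, so integrating in $s$ and using the rectifiable structure of $\mathbb Z_2$ flat chains extends the bound from smooth pieces to arbitrary $T$.

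Next, I apply this to sweepouts. The space $\mathcal{P}_p$ is metric-independent: the topology on $\mathcal{Z}_n(M;\Z_2)$ and the definition of $p$-sweepout are topological, and the no-concentration condition transfers between comparable metrics via ball inclusions $B_r^{g'}(p)\subset B_{Cr}^g(p)$ combined with the mass comparison. Hence for any $\Phi\in\mathcal{P}_p$,
\[
\sup_x M_{g'}(\Phi(x))\le(1+C|g-g'|_{\tilde g})\sup_x M_g(\Phi(x)),
\]
and taking infima yields $\omega_p(M,g')\le(1+C|g-g'|_{\tilde g})\omega_p(M,g)$. Swapping $g$ and $g'$ gives $|\omega_p(M,g)-\omega_p(M,g')|\le C|g-g'|_{\tilde g}\max(\omega_p(M,g),\omega_p(M,g'))$.

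Finally, I need a uniform bound on $p^{-1/(n+1)}\omega_p(M,g)$ over $\mathcal{U}$. Applying the mass comparison in its nonlinear form $M_g\le (C_2)^{n/2}M_{\tilde g}$, we get $\omega_p(M,g)\le(C_2)^{n/2}\omega_p(M,\tilde g)$ for every $p$, and the Weyl law applied to $\tilde g$ ensures $p^{-1/(n+1)}\omega_p(M,\tilde g)$ is bounded in $p$. Thus there is $K_0=K_0(\tilde g,C_2)$ with $p^{-1/(n+1)}\omega_p(M,g)\le K_0$ for all $g\in\mathcal{U}$ and all $p$, and dividing the previous inequality by $p^{1/(n+1)}$ yields the lemma with $K=CK_0$. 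The main technical point to watch is step one: one must verify that the rectifiable structure theorem produces a pointwise mass formula allowing the pointwise volume-form comparison to extend to all flat chains (not merely smooth embedded pieces), and that the convexity of $\mathcal{U}$ is genuinely used so that the differentiation along $g_s$ stays inside the class where the constants are controlled.
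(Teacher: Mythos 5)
Your proof is correct and follows essentially the same route as the paper: a multiplicative comparison of widths between uniformly equivalent metrics (your step-one mass estimate is exactly the content of Lemma 2.1 of \cite{irie-marques-neves}, which the paper simply cites) combined with a uniform sublinear bound $\omega_p(M,g)\le C\,p^{1/(n+1)}$ on the comparable class. The only notable difference is that you invoke the Weyl law for this last bound, whereas the paper uses the more elementary Gromov--Guth estimate; your choice is logically sound but heavier than necessary.
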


\begin{proof}
It follows from the Gromov-Guth bound (\cite{gromov0}, \cite{guth}, see Theorem 5.1 of  \cite{marques-neves-infinitely})  that there exists $C=C(\tilde{g})$ such that $\omega_p(M, \tilde{g}) \leq Cp^{\frac{1}{(n+1)}}$ for every $p\in \mathbb{N}$. 

Given $g,g'\in \{h\in \Gamma_2 ; C_1\tilde{g} \leq h \leq C_2 \tilde{g}\}$, one can check (see Lemma 2.1 of \cite{irie-marques-neves}) that
\begin{eqnarray*}
&&\omega_p(M,g')-\omega_p(M,g) \leq \left(\left(\sup_{v \neq 0} \frac{g'(v,v)}{g(v,v)}\right)^\frac{n}{2}-1\right)\omega_p(M,g)\\
&& \leq \left(\left(1 + \sup_{v \neq 0} \frac{|g(v,v)-g'(v,v)|}{g(v,v)}\right)^\frac{n}{2}-1\right) \omega_p(M,g)\\
&& \leq \left(\left(1 +C_1^{-1}  |g-g'|_{\tilde{g}}\right)^\frac{n}{2}-1\right) \omega_p(M,g)\\
&& \leq \left(\left(1 +C_1^{-1}  |g-g'|_{\tilde{g}}\right)^\frac{n}{2}-1\right) C_2^\frac{n}{2}\omega_p(M,\tilde{g})\\
&& \leq \left(\left(1 +C_1^{-1}  |g-g'|_{\tilde{g}}\right)^\frac{n}{2}-1\right) C_2^\frac{n}{2}Cp^\frac{1}{n+1},\\
\end{eqnarray*}
from which the result follows.

\end{proof}

The next lemma concerns the differentiability properties of the $p$-width restricted to a generic finite-dimensional family of metrics. Let $I^N=[0,1]^N$.
\begin{lemme} \label{differential}
Let $g:I^N \rightarrow \Gamma_q$ be a smooth embedding, $N \in \mathbb{N}$. If $q \geq N+3$, then there exists an arbitrarily small perturbation in the $C^\infty$ topology  $g':I^N \rightarrow \Gamma_q$ of $g$ such that  there is a subset $\mathcal{A}\subset I^N$ of full $N$-dimensional Lebesgue measure with the following property: for any $p\in \mathbb{N}$ and  any point $t$ of $\mathcal{A}$, the function $s \mapsto \omega_p(g'(s))$ is differentiable at $t$ and there exists a disjoint collection $\{\Sigma_1, \dots, \Sigma_Q\}$ of closed, $C^q$, embedded, minimal hypersurfaces of $(M,g'(t))$ together with integers $\{m_1, \dots, m_Q\} \subset \mathbb{N}$ so that
$$
\omega_p(g'(t)) = \sum_{j=1}^Q m_j {\rm vol}_{g'(t)}(\Sigma_j), \, \, \,  \sum_{j=1}^Q {\rm index}(\Sigma_j) \leq p,
$$
and
\begin{eqnarray*}
\frac{\partial}{\partial v}(\omega_p \circ g')_{|s=t}&=&\frac{\partial}{\partial v} \left(\sum_{j=1}^Q m_j {\rm vol}_{g'(s)}(\Sigma_j)\right)_{|s=t}\\
&=& \sum_{j=1}^Q m_j \int_{\Sigma_j} \frac12 \Tr_{\Sigma_j, g'(t)} \left(\frac{\partial g'}{\partial v}_{|s=t}\right) d\Sigma_j
\end{eqnarray*}
for every $v \in \mathbb{R}^N$.
\end{lemme}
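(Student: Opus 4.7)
The plan combines three ingredients: White's Structure Theorem (\cite{white2}, Theorem 2.1) for the moduli space of embedded minimal hypersurfaces, Rademacher's theorem applied to the Lipschitz function from Lemma \ref{lipschitz}, and min-max existence of hypersurfaces realizing $\omega_p$ together with compactness for bounded index. First, I would apply White's theorem to perturb $g$ to $g':I^N\to\Gamma_q$ so that the $N$-parameter family is transverse to the universal moduli space of closed embedded minimal hypersurfaces. This produces a full-measure set $\mathcal{B}\subset I^N$ on which every closed embedded minimal hypersurface of $(M,g'(t))$ is nondegenerate, hence isolated in the varifold topology and equipped with a unique smooth deformation $\Sigma(s)$ as $s$ varies near $t$; the hypothesis $q\geq N+3$ provides the regularity needed for this step.

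Next, Lemma \ref{lipschitz} makes each $\omega_p\circ g'$ Lipschitz on $I^N$, so Rademacher's theorem and a countable intersection over $p\in\mathbb{N}$ yield a full-measure set $\mathcal{A}\subset\mathcal{B}$ on which every $\omega_p\circ g'$ is differentiable. Fix $t\in\mathcal{A}$, $p\in\mathbb{N}$, and $v\in\mathbb{R}^N$. By Almgren--Pitts/Marques--Neves min-max theory, for each small $\epsilon>0$ the value $\omega_p(g'(t+\epsilon v))$ is attained by a stationary integral varifold $V_\epsilon=\sum_j m_{j,\epsilon}\Sigma_{j,\epsilon}$ supported on finitely many smooth embedded minimal hypersurfaces, with total index $\leq p$. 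By compactness for minimal hypersurfaces of bounded index (Sharp) and the continuity of $\omega_p$, a subsequence converges in the varifold sense to $V=\sum_j m_j\Sigma_j$ minimal in $g'(t)$, with ${\rm vol}_{g'(t)}(V)=\omega_p(g'(t))$ and total index $\leq p$.

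By nondegeneracy each $\Sigma_j$ admits a unique smooth family $\Sigma_j(s)$ of deformations, and by isolation $V_\epsilon=\sum_j m_j\Sigma_j(t+\epsilon v)$ for $\epsilon$ small along the subsequence. Hence $\omega_p(g'(t+\epsilon v))=\sum_j m_j{\rm vol}_{g'(t+\epsilon v)}(\Sigma_j(t+\epsilon v))$ there, and passing to the limit using the assumed differentiability at $t$ gives
$$\partial_v(\omega_p\circ g')(t)=\frac{d}{d\epsilon}\Big|_{\epsilon=0}\sum_j m_j\,{\rm vol}_{g'(t+\epsilon v)}(\Sigma_j(t+\epsilon v)).$$
The standard first-variation formula for area under simultaneous variation of metric and surface splits the right-hand side into a surface-variation term (which vanishes since each $\Sigma_j$ is minimal in $g'(t)$) and a metric-variation term equal to $\sum_j m_j\int_{\Sigma_j}\tfrac12\Tr_{\Sigma_j,g'(t)}(\partial_v g')\,d\Sigma_j$.

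The main obstacle is that a priori the subsequence limit $V$ depends on $v$, whereas the lemma demands a single collection $\{\Sigma_1,\dots,\Sigma_Q\}$ valid for every $v\in\mathbb{R}^N$. Since $\partial_v(\omega_p\circ g')(t)$ is a single linear functional of $v$, and since nondegeneracy together with compactness forces the set of candidate minimal hypersurfaces realizing $\omega_p(g'(t))$ with index $\leq p$ to be finite, I expect to argue---via a Sard-type refinement of the White perturbation and the multi-parameter transversality granted by $q\geq N+3$---that all these candidates produce the same first-order variation formula, so any one of them serves as the common collection. Securing this coherence is the technical heart of the argument.
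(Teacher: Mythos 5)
Your skeleton matches the paper's (White's Structure Theorem plus Smale transversality, Rademacher for differentiability, min-max realization with index $\leq p$, Sharp compactness, first variation of area, then a coherence step), but there are two genuine gaps. The first is the step ``by nondegeneracy each $\Sigma_j$ is isolated\dots and by isolation $V_\epsilon=\sum_j m_j\Sigma_j(t+\epsilon v)$ for $\epsilon$ small.'' Sharp's compactness gives subsequential varifold convergence, but to identify $V_\epsilon$ with the unique smooth continuation of $V$ you need the convergence of the supports to be smooth and with multiplicity one, and nondegeneracy does not deliver this: a sequence of two-sided connected minimal hypersurfaces can collapse with multiplicity two onto a \emph{one-sided} minimal hypersurface, and transversality for $\Pi$ at the one-sided limit (which lives in the component of $\mathfrak{M}$ for its own diffeomorphism type) says nothing about this degeneration, so the limit varifold $2\Sigma$ need not be isolated. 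The paper excludes exactly this scenario by introducing the closed sets $\mathcal{S}'_{\kappa,c}(p)$ of parameters whose realizing varifolds have $|A|\leq\kappa$ and satisfy the separation condition $(\star)$, and then replaces your straight ray $t+\epsilon v$ by sequences $t_m(v)\in\mathcal{S}'_{\kappa,c}(p)$ approaching $t$ from direction $v$, which exist because $t$ is taken to be a Lebesgue density point of $\mathcal{S}'_{\kappa,c}(p)$. This density-point device also supplies the uniform curvature bound along the approximating sequence, which you do not have along an arbitrary ray. None of this apparatus appears in your proposal, and without it the identification of $V_\epsilon$ with the continued family fails.

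The second gap is the coherence step, which you correctly flag but resolve in the wrong direction: you propose to show that \emph{all} candidate varifolds realizing $\omega_p(g'(t))$ yield the same first-variation functional. There is no reason for that to hold, and transversality does not give it. The paper's argument is weaker and suffices: multiplicity-one convergence plus nondegeneracy forces the set of candidates $\{V^{(1)},\dots,V^{(P)}\}$ to be finite, for each unit direction $v$ the derivative $\frac{\partial}{\partial v}\omega_p(t)$ agrees with the first-variation formula of \emph{some} $V^{(l)}$, and since finitely many candidates must cover all directions, one of them works for $N$ linearly independent directions; both sides being linear in $v$, that single candidate works for every $v$. So the ``technical heart'' you defer is handled by a finiteness-plus-pigeonhole argument, not by the uniqueness of the first-order variation that you anticipate.
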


\begin{proof}

Let $g:I^N \rightarrow \Gamma_q$ be a smooth embedding.  Consider a sequence $\{S_i\}_i$ that enumerates all the diffeomorphism types of $n$-dimensional closed manifolds, and let $\mathfrak{M}(S_i)$ be the Banach manifold of pairs $(\gamma, [u])$ as in the Structure Theorem of White \cite{white2} (Theorem 2.1), where $\gamma$ is a $C^q$ Riemannian metric and $u:S_i \to M$ is a $C^{2,\alpha}$ embedding that is minimal with respect to $\gamma$.  Define $\mathfrak{M}:=\bigcup_i \mathfrak{M}(S_i)$ and the projection $\Pi:\mathfrak{M}\to \Gamma_q$ which sends $(\gamma,[u])$ to $\gamma$. Theorem 2.1 of \cite{white2} (see also \cite{white}) gives that $\mathfrak{M}$ is a separable $C^{q-2}$ Banach manifold and that $\Pi$ is a $C^{q-2}$ Fredholm map with Fredholm index zero. The pair $(\gamma, [u])$ is a critical point of $\Pi$ if and only if $u$ admits a nontrivial Jacobi field with respect to the metric $\gamma$.

We can perturb $g:I^N \rightarrow \Gamma_q$ slightly in the $C^\infty$ topology to a $C^\infty$ embedding $g':I^N \rightarrow \Gamma_q$ that is transversal to $\Pi: \mathfrak{M} \rightarrow \Gamma_q$ by Smale's Transversality Theorem (Theorem 3.1 of  \cite{smale}). Transversality implies $\tilde{I}^N = \Pi^{-1}(g'(I^N))$ is an $N$-dimensional submanifold of $\mathfrak{M}$ (Theorem 3.3 of \cite{smale}).
Let $\pi=(g')^{-1} \circ \Pi_{|\tilde{I}^N}$, so $\pi: \tilde{I}^N \rightarrow I^N$. Let $\mathcal{A}'$ be the subset of points $t' \in I^N$ which are regular values of $\pi$ and such that the Lipschitz function $t \mapsto \omega_p(t):= \omega_p(M,g'(t))$ is differentiable at $t'$ for all $p$. This subset is of full Lebesgue measure in $I^N$ by Rademacher's Theorem and Sard's Theorem. 
Note that if $t' \in \mathcal{A}'$, then $g'(t')$ is a regular value of $\Pi$. 


Consider 
\begin{eqnarray*}
&&\mathcal{S}'_{\kappa,c}(p) := \Big\{t\in I^N; \exists V\in \mathcal{V}(g'(t)), ||V||_{g'(t)}(M)=\omega_p(t),\\
&& \hspace{2cm} {\rm index}(\spt(V))\leq p, \max_{\spt(V)}|A| \leq \kappa,\\
&& \hspace{2cm} {\rm \, and \,} V {\rm \, satisfies\,} (\star)_{c, \kappa, \sup_{s\in I^N}\omega_p(s)}\Big\}
\end{eqnarray*}
for any $\kappa >0$ and $c>0$, where 
\begin{itemize}
\item[($\star_{c,\kappa, a}$)]: every two-sided, connected component of  $\spt{V}$,  has varifold distance (in the metric $g'(0)$) at least $c$ of any varifold $2 \Sigma$, where $\Sigma$ is a one-sided, embedded, connected minimal hypersurface in $g'(s)$ with $|A|\leq \kappa$ and ${\rm vol}_{g'(s)}(\Sigma) \leq a$ for some $s\in I^N$.
\end{itemize}

Each $\mathcal{S}'_{\kappa,c}(p)$ is a closed set, by convergence properties of minimal hypersurfaces. Proposition 2.2 of \cite{irie-marques-neves} (which uses the index estimates of \cite{marques-neves-index}) also holds for $C^q$ metrics if we allow the minimal hypersurfaces to be $C^2$. This follows, for instance, by approximating the $C^q$ metric by $C^\infty$ metrics, applying Proposition 2.2 of \cite{irie-marques-neves} to these metrics and using Sharp's Compactness Theorem (\cite{sharp}). It implies $$
\cup_{\kappa, c \in \mathbb{Q}_+} \,\mathcal{S}'_{\kappa,c}(p) = I^N
$$
for every $p \in \mathbb{N}$.

For $\kappa>0$ and $c>0$, define $\mathcal{S}_{\kappa,c}(p)$ to be the set of points where the Lebesgue density of $\mathcal{S}'_{\kappa,c}(p)$ is one. By the Lebesgue density theorem, $\mathcal{S}'_{\kappa,c}(p)\backslash \mathcal{S}_{\kappa,c}(p)$ has measure zero. Finally, define the full Lebesgue measure set
$$\mathcal{A}:=\mathcal{A}'\cap \bigcap_p \bigcup_{\kappa, c}\mathcal{S}_{\kappa,c}(p).$$

Fix $p\in \mathbb{N}$, and let $t\in\mathcal{A}$. There exist  $\kappa>0$ and $c>0$ such that $t\in\mathcal{S}_{\kappa,c}(p)$. Since the Lebesgue density of  $\mathcal{S}'_{\kappa,c}(p)$ at $t$ is one, we have that   for any unit direction $v$, there is a sequence $\{t_m(v)\}_m\subset \mathcal{S}'_{\kappa,c}(p)$ converging to $t$ with $\frac{t_m(v)-t}{|t_m(v)-t|}$ converging to $v$, so that 
\begin{equation} \label{quick}
\lim_{m\to \infty} \frac{\omega_p(t_m(v))-\omega_p(t)}{|t_m(v)-t|} = \frac{\partial}{\partial v} \omega_p(t).
\end{equation}

Fix $v$ and a corresponding sequence $\{t_m(v)\}_m$. By construction, for each $m$ there is a $V_m\in\mathcal{V}(g'(t_m(v)))$ with mass $\omega_p(t_m(v))$, with ${\rm index}(\spt(V_m))\leq p$, whose support has second fundamental form bounded by $\kappa$ (which is independent of $m$) and such that every two-sided, connected component of  $\spt{V_m}$  has varifold distance (in the metric $g'(0)$) at least $c$ (also independent of $m$) from any varifold $2 \Sigma$, where $\Sigma$ is a one-sided, connected minimal hypersurface in $g'(s)$ with $|A|\leq \kappa$ and ${\rm vol}_{g'(s)}(\Sigma) \leq \sup_{s\in I^N}\omega_p(s)$ for some $s\in I^N$. This implies no two-sided component of $\spt{V_m}$ can collapse, after maybe passing to a subsequence, to a one-sided component with multiplicity two. 
Choosing a subsequence and renumbering if necessary, $V_m$ converges to a varifold $V\in\mathcal{V}(g'(t))$ and the supports $\spt(V_m)$ converge in $C^2$ to $\spt(V)$. This convergence is with multiplicity one, because if not one could construct by a standard argument a nontrivial Jacobi field on one of the components of $\spt(V)$. This is not possible, since  $g'(t)$ is a regular value of $\Pi$.

Consider a sequence $\{\Sigma_m\}$ of  connected components of $\spt(V_m)$ that converges in $C^2$ to $\Sigma$. By elliptic regularity, the convergence is also in $C^{2,\alpha}$. The corresponding points $$\tilde{z}_m = (g'(t_m(v)), [\Sigma_m]) \in \tilde{I}^N \subset \mathfrak{M}$$  converge to a point $z\in\Pi^{-1}(g'(t)) \subset \tilde{I}^N$, $z=(g'(t),[\Sigma])$. Note that since $t\in\mathcal{A}$, $\pi$ is  a local diffeomorphism from a neighborhood of $z$ in $\tilde{I}^N$ to a neighborhood of $t$ in $I^N$. We write
$\tilde{z}=(g'(\pi(\tilde{z})), [\Sigma(\pi(\tilde{z}))])$ for any $\tilde{z}$ in this neighborhood of $z$. For sufficiently large $m$, $[\Sigma_m]=[\Sigma(t_m(v))]$. But then
\begin{eqnarray*}
&&\lim_{m\to \infty} \frac{{\rm vol}_{g'(t_m(v))}(\Sigma(t_m(v)))-{\rm vol}_{g'(t)}(\Sigma(t))}{|t_m(v)-t|} = \frac{\partial}{\partial v} {\rm vol}_{g'(s)}(\Sigma(s))_{|s=t}\\
&&=\frac12 \int_\Sigma \Tr_{\Sigma, g'(t)}\left(\frac{\partial g'}{\partial v}(t)\right) d \Sigma.
\end{eqnarray*}
Taking into account the multiplicity of each connected component of $\spt(V_m)$, the limit in (\ref{quick}) becomes
$$\frac{\partial}{\partial v} \omega_p(t)= \int_{V} \frac12 \Tr_{V, g'(t)} \left(\frac{\partial g'}{\partial v}_{|s=t}\right) d||V||(M),$$
where $V$ is of the form $\sum_{i=1}^Q m_i\tilde{\Sigma}_i$, with $\{\tilde{\Sigma}_1, \dots, \tilde{\Sigma}_Q\}$ a disjoint collection of closed, $C^{2,\alpha}$, embedded, minimal hypersurfaces in $(M,g'(t))$ and $\{m_1, \dots, m_Q\} \subset \mathbb{N}$, $||V||(M)=\omega_p(t)$, $\sum_{i=1}^Q {\rm index}(\tilde{\Sigma}_i) \leq p$, $\max_{\spt(V)}|A| \leq \kappa$ and $V$ satisfies $(\star_{\kappa, c, \sup_{s\in I^N}\omega_p(s)})$. 
By elliptic regularity, each $\tilde{\Sigma}_i$ is of class $C^q$.
Since $t$ is a regular value of $\pi$, every embedded minimal hypersurface of $(M,g'(t))$ is non-degenerate. Because convergence of the supports can only happen with multiplicity one, there are only finitely many
$V$'s as above, say $\{V^{(1)}, \dots, V^{(P)}\}$. For any unit direction $v \in \mathbb{R}^N$, one has
$$\frac{\partial}{\partial v} \omega_p(t)= \int_{V^{(l)}} \frac12 \Tr_{V^{(l)}, g'(t)} \left(\frac{\partial g'}{\partial v}_{|s=t}\right) d||V^{(l)}||(M)$$
for some $1\leq l \leq P$. This means that there will be a single $1\leq l \leq P$ such that the above formula is true for a linearly independent set $\{v_1,\dots, v_N\}$, and hence for every $v$ by linearity. This finishes the proof.

\end{proof}

The next lemma concerns the gradient of Lipschitz functions that are almost constant. The convex hull of a set $K\subset \mathbb R^N$ is denoted by  $\Conv(K)$.
\begin{lemme} \label{maximum point}
Given $\delta>0$ and $N \in \mathbb{N}$, there exists $\varepsilon>0$ depending on $\delta$ and $N$ such that the following is true:  for  any Lipschitz function $f:I^N \to \mathbb{R}$  satisfying
$$
|f(x)-f(y)|\leq 2\varepsilon
$$
for every $x,y\in I^N$, and for any subset $\mathcal{A}$ of $I^N$ of full measure,
 there exist $N+1$ sequences of points $\{y_{1,m}\}_m,\dots,\{y_{N+1,m}\}_m$ contained in $\mathcal{A}$ and converging to a common limit $y\in(0,1)^N$ such that:
\begin{itemize}
\item $f$ is differentiable at each $y_{i,m}$,
\item the gradients $\nabla f(y_{i,m})$ converge to $N+1$ vectors $v_1,\dots,v_{N+1}$ with
$$
d_{\mathbb{R}^N}\left(0, \Conv(v_1,\dots,v_{N+1})\right) < \delta,
$$ 
\end{itemize}

\end{lemme}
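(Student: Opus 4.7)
The plan is to reduce to the following assertion: if $\varepsilon$ is small enough in terms of $\delta$ and $N$, there exists $y\in(0,1)^N$ at which the Clarke subdifferential
\[
\partial^C f(y):=\Conv\{\lim_m\nabla f(z_m):z_m\to y,\ f\text{ differentiable at }z_m\}
\]
satisfies $d_{\mathbb{R}^N}(0,\partial^C f(y))<\delta$. Given this, the minimum-norm element $v^*$ of $\partial^C f(y)$ has $|v^*|<\delta$, and by Caratheodory's theorem in $\mathbb{R}^N$ one can write $v^*$ as a convex combination of at most $N+1$ extreme limits $v_1,\dots,v_{N+1}$, each of the form $v_i=\lim_m\nabla f(y_{i,m})$ for sequences $y_{i,m}\to y$ of differentiability points of $f$. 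A classical result of Clarke asserts that $\partial^C f(y)$ is unaffected by discarding a null set of differentiability points, so we may arrange each $y_{i,m}\in\mathcal{A}$. Both bullet points of the lemma then follow for these $N+1$ sequences and their common limit $y$.

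To produce such a $y$, I would argue by contradiction, assuming $d(0,\partial^C f(y))\geq\delta$ for every $y\in(0,1)^N$. At each such $y$, set $u(y):=-v^*(y)/|v^*(y)|$ where $v^*(y)$ is the projection of $0$ onto $\partial^C f(y)$. Since $v^*(y)$ is the nearest point in a convex set, $\langle w,v^*(y)\rangle\geq|v^*(y)|^2$ for every $w\in\partial^C f(y)$, so Clarke's generalized directional derivative satisfies
\[
f^\circ(y;u(y))=\max_{w\in\partial^C f(y)}\langle w,u(y)\rangle\leq -|v^*(y)|\leq -\delta.
\]
Using the joint upper semicontinuity of $f^\circ$ and compactness of $[1/4,3/4]^N$, cover this cube by finitely many open sets $U_i\subset(0,1)^N$ such that some fixed unit direction $u_i$ still satisfies $f^\circ(y;u_i)\leq -\delta/2$ for all $y\in U_i$. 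A smooth partition of unity $\{\phi_i\}$ subordinate to the $U_i$ then produces a smooth vector field $v(y):=\sum_i\phi_i(y)u_i$ with $|v|\leq 1$ and, by the sublinearity of $f^\circ(y;\cdot)$,
\[
f^\circ(y;v(y))\leq \sum_i\phi_i(y)f^\circ(y;u_i)\leq -\delta/2.
\]

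Now solve the ODE $\dot y(t)=v(y(t))$ with $y(0)=(1/2,\dots,1/2)$. Since $|\dot y|\leq 1$, the trajectory stays in $[1/4,3/4]^N$ for $t\in[0,1/4]$. The nonsmooth chain rule for a Lipschitz function along a smooth curve yields $(f\circ y)'(t)\leq f^\circ(y(t);\dot y(t))\leq -\delta/2$ almost everywhere, whence $f(y(1/4))-f(y(0))\leq-\delta/8$. This contradicts the oscillation bound $|f(x)-f(y)|\leq 2\varepsilon$ as soon as one picks $\varepsilon<\delta/16$, closing the contradiction. The main obstacle I foresee is this descent construction: one must manufacture a single smooth vector field that is simultaneously a Clarke-descent direction at every point of the cube, using only the pointwise hypothesis $d(0,\partial^C f(y))\geq\delta$. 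The interplay of upper semicontinuity and sublinearity of $f^\circ$ with the partition-of-unity interpolation is the key technical point; once it is in place, the ODE flow and the Caratheodory selection of $N+1$ sequences inside $\mathcal{A}$ proceed routinely.
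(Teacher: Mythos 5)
Your proof is correct, but it takes a genuinely different route from the paper's. The paper argues by contradiction over a sequence of functions $f_k$ with oscillation at most $1/k$: it subtracts a small bump $\frac{2}{k}g$ to force an interior maximum $y_k$ of $h_k=f_k-\frac{2}{k}g$, shows by an almost-every-ray argument that for every direction $w$ some limit $v$ of gradients $\nabla h_k(z_m)$ with $z_m\in\mathcal{A}_k\cap\mathcal{D}_k$ satisfies $\langle v,w\rangle\le 0$, concludes $0\in\Conv(V_k)$ by Hahn--Banach, and extracts the $N+1$ sequences by Caratheodory, the perturbation contributing only $O(1/k)$ to the gradients. Your argument packages the same underlying object as the Clarke subdifferential: the insensitivity of $\partial^C f(y)$ to null sets (Clarke's representation theorem) replaces the paper's hand-made ray argument for restricting to $\mathcal{A}$, and the descent ODE built from the upper semicontinuity and sublinearity of $f^\circ$ replaces the interior-maximum trick. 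What you gain is an explicit constant ($\varepsilon<\delta/16$) and a cleaner conceptual statement (small oscillation forces a point with $d(0,\partial^C f(y))<\delta$); what the paper gains is self-containedness, using only Rademacher, Hahn--Banach and Caratheodory rather than importing Clarke's theorem, the joint upper semicontinuity of $f^\circ$, and the nonsmooth chain rule along trajectories. Both the covering/partition-of-unity construction of the descent field and the estimate $(f\circ y)'(t)\le f^\circ(y(t);\dot y(t))$ almost everywhere are standard and correctly applied, so I see no gap.
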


\begin{proof}
Suppose, by contradiction, that the lemma is false. Then there exists a sequence of Lipschitz functions $f_k:I^N \to \mathbb{R}$  satisfying
$$
|f_k(x)-f_k(y)|\leq 1/k
$$
for every $x,y\in I^N$, and a sequence of sets $\mathcal{A}_k \subset I^N$ of full measure, such that these sequences of points do not exist. Since $f_k$ is Lipschitz, the set $\mathcal{D}_k\subset I^N$ of points where $f_k$ is differentiable has full measure by Rademacher's Theorem. Hence the set $\mathcal{A}'_k=\mathcal{A}_k \cap \mathcal{D}_k$ has full measure also.

Choose a smooth function $g:I^N \to \mathbb{R}$ such that $g$ is equal to $1$ on the boundary of $I^N$ and equal to $0$ at $(1/2,\dots,1/2)\in I^N$. Then  the Lipschitz function $h_k=f_k-\frac{2}{k}g$ achieves its maximum at an interior point $y_k\in(0,1)^N$. Consider the  set $V_k \subset \mathbb{R}^N$ of vectors $v$ such that there exists a sequence $z_m \in \mathcal{A}'_k$ with $z_m \rightarrow y_k$ and $\nabla h_k(z_m)\rightarrow v$ as $m \rightarrow \infty$. The set $V_k$ is bounded and closed. For almost all directions $w$ in the unit sphere $S^{N-1}$, the set $$\{t\in [0,d_{\mathbb{R}^N}(y_k,\partial I^N)]: y_k+t w \in \mathcal{A}_k'\}$$
has full measure in $ [0,d_{\mathbb{R}^N}(y_k,\partial I^N)]$. For any such $w$, because $h_k$ has a maximum point at $y_k$, there exists $v \in V_k$ with $\langle v, w\rangle \leq 0$. This implies that for any $w \in \mathbb{R}^N$, there exists $v\in  V_k$ with $\langle v, w\rangle \leq 0$.  By the Hahn-Banach Theorem, $0 \in {\rm Conv}(V_k)$. Caratheodory's Theorem gives vectors $\{\tilde{v}_1, \dots, \tilde{v}_{N+1}\} \subset V_k$ such that $0 \in {\rm Conv}(\{\tilde{v}_1, \dots, \tilde{v}_{N+1}\})$. Hence  there exist $N+1$ sequences of points $\{y^{(k)}_{1,m}\}_m,\dots,\{y^{(k)}_{N+1,m}\}_m$ contained in $\mathcal{A}_k'$ and converging to  $y_k\in(0,1)^N$ such that:
\begin{itemize}
\item $f_k$ is differentiable at each $y^{(k)}_{i,m}$,
\item the gradients $\nabla f_k(y^{(k)}_{i,m})$ converge to $N+1$ vectors $v_1,\dots,v_{N+1}$ with
$$
d_{\mathbb{R}^N}\left(0, \Conv(v_1,\dots,v_{N+1})\right) \leq \frac{2}{k} \sup_{I^N} |\nabla g|.
$$ 
\end{itemize}

If $k$ is sufficiently large, $\frac{2}{k} \sup_{I^N} |\nabla g| < \delta$. Contradiction.
\end{proof}

The last lemma shows that one can  make  finitely many closed, embedded, minimal hypersurfaces nondegenerate by an arbitrarily small conformal change of the metric. This  generalizes Proposition 2.3 of \cite{irie-marques-neves}.
\begin{lemme} \label{make hypersurface nondegenerate again}
Suppose $g\in \Gamma_q$, $q\geq 2$. Let $\{\Sigma_1, \dots, \Sigma_L\}$ be a finite collection of  closed, embedded, connected, $C^2$ minimal hypersurfaces in $(M,g)$. Then there exists a sequence of metrics $g_i\in \Gamma_q$, $i\in \mathbb{N}$, converging to $g$ in the $C^q$ topology such that $\Sigma_j$ is a nondegenerate minimal hypersurface in $(M,g_i)$ for all $j=1, \dots, L$ and $i\in \mathbb{N}$.
\end{lemme}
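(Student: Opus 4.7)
The plan is to construct the approximating metrics as conformal perturbations $g_i = e^{2 t_i \phi} g$ for a single conformal factor $\phi \in C^q(M)$ and a sequence $t_i \to 0^+$. Under a conformal change $\tilde g = e^{2\phi} g$, the mean curvature of $\Sigma_j$ transforms proportionally to $\partial_{\nu_j}\phi|_{\Sigma_j}$ (since $H_{\Sigma_j} = 0$ in $g$), so if one requires $\phi|_{\Sigma_j} \equiv 0$ and $d\phi|_{\Sigma_j} \equiv 0$ for every $j$, each $\Sigma_j$ stays exactly minimal in $\tilde g$, with unchanged induced metric and unchanged second fundamental form. Let $\mathcal{C} \subset C^q(M)$ denote this (closed, infinite-dimensional) subspace of admissible conformal factors.

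Under such a change only the $\Ric(\nu_j,\nu_j)$ term of the Jacobi operator $L_{\Sigma_j} = \Delta_{\Sigma_j} + |A|^2 + \Ric(\nu_j,\nu_j)$ can vary at first order in $t$. A standard conformal-change computation for the Ricci tensor, combined with the identity $\Delta_M\phi|_{\Sigma_j} = \Hess_\phi(\nu_j,\nu_j)|_{\Sigma_j}$ (valid because $H_{\Sigma_j} = 0$, $\phi|_{\Sigma_j} = 0$, $d\phi|_{\Sigma_j} = 0$), yields
$$
\frac{d}{dt}\bigg|_{t=0} L_{\Sigma_j}^{e^{2t\phi}g} \;=\; -n\, \Hess_\phi(\nu_j,\nu_j)\big|_{\Sigma_j},
$$
acting as a multiplication operator on $C^\infty(\Sigma_j)$. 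Crucially, for $\phi \in \mathcal{C}$ the function $h_j := \Hess_\phi(\nu_j,\nu_j)|_{\Sigma_j} \in C^{q-2}(\Sigma_j)$ is a free parameter, constrained only at intersection points $p \in \Sigma_j \cap \Sigma_k$ where the $2$-jet conditions from $\Sigma_j$ and $\Sigma_k$ must be compatible.

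To exploit this, pick an $L^2$-orthonormal basis $\{u_a^{(j)}\}_{a=1}^{d_j}$ of the finite-dimensional space $\ker L_{\Sigma_j}$. Aronszajn's unique continuation theorem implies each nontrivial linear combination of the $u_a^{(j)}$ is nonvanishing on a dense open subset of $\Sigma_j$; in particular its support has positive measure in $\Sigma_j \setminus \bigcup_{k\neq j}\Sigma_k$ (since the intersections $\Sigma_j \cap \Sigma_k$ have vanishing $n$-dimensional measure in $\Sigma_j$). One can therefore choose smooth functions $h_j$ supported in $\Sigma_j \setminus \bigcup_{k\neq j}\Sigma_k$ such that the symmetric matrix $\bigl(\int_{\Sigma_j} h_j\, u_a^{(j)} u_b^{(j)}\,d\Sigma_j\bigr)_{a,b}$ is non-singular for every $j$. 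Using Fermi coordinates around each $\Sigma_j$ and a partition of unity, construct the corresponding $\phi \in \mathcal{C}$ with $\Hess_\phi(\nu_j,\nu_j)|_{\Sigma_j} = h_j$ for each $j$; compatibility at the intersections is automatic since every $h_j$ vanishes there. Kato's analytic perturbation theorem then implies that the eigenvalues of $L_{\Sigma_j}^{e^{2t\phi}g}$ near $0$ are real-analytic in $t$ with nonzero derivatives at $t = 0$, hence nonzero for all sufficiently small $t > 0$ outside a locally finite set. Taking $t_i \to 0^+$ avoiding these exceptional values produces the required sequence.

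The main obstacle is the Fermi coordinate / partition-of-unity construction of a single $\phi \in C^q(M)$ simultaneously realizing the prescribed normal-normal Hessians $h_j$ on every $\Sigma_j$, especially in neighborhoods of intersections where several tubular models overlap. It becomes tractable precisely because each $h_j$ is compactly supported away from $\bigcup_{k\neq j}\Sigma_k$, so near any intersection point the relevant local models all reduce to $\phi \equiv 0$ with vanishing $1$-jet in every normal direction, removing any genuine compatibility constraint. A minor, separate point is one-sided $\Sigma_j$: the conditions $\phi|_{\Sigma_j} = 0$, $d\phi|_{\Sigma_j} = 0$, and the value of $\Hess_\phi(\nu_j,\nu_j)|_{\Sigma_j}$ are all invariant under $\nu_j \to -\nu_j$, so one-sidedness causes no difficulty.
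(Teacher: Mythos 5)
Your construction is, at its core, the same as the paper's: a conformal factor that vanishes to second order along each $\Sigma_j$ (so minimality, the induced metric and the second fundamental form are preserved), is supported away from the mutual intersections (which the paper also arranges, via a unique-continuation argument producing points $p_l\in\Sigma_l\setminus\bigcup_{k\neq l}\Sigma_k$), and whose only first-order effect on the Jacobi operator is the multiplication operator $-n\,\Hess_\phi(\nu,\nu)|_{\Sigma_j}$ --- your formula agrees exactly with the shift computed in the paper. Where you diverge is the final spectral step. The paper takes $\phi$ of the special form $-\tfrac1i\sum_l\varphi_l f_l^2$, so that the perturbing function has a definite sign and is strictly positive on an open subset of each $\Sigma_l$; unique continuation for Jacobi fields then gives a uniform strictly negative derivative of the Rayleigh quotient on the whole (finite-dimensional) zero eigenspace, and the min-max characterization of eigenvalues pushes $\lambda_Q$ strictly below zero while $\lambda_{Q+1}$ stays positive by continuity --- no analytic perturbation theory is needed, only continuity of eigenvalues. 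You instead allow a general $h_j$ subject to nonsingularity of the weighted Gram matrix $\bigl(\int_{\Sigma_j}h_j u_a^{(j)}u_b^{(j)}\bigr)_{a,b}$ on the kernel and invoke Kato's analytic perturbation theory for the degenerate eigenvalue at $0$. This is correct (the family $t\mapsto L^{e^{2t\phi}g}_{\Sigma_j}$ is polynomial in $t$ on $\Sigma_j$, and nonzero first derivatives of the analytic branches already rule out zeros on a punctured interval, so your ``locally finite exceptional set'' is superfluous), and it is marginally more flexible, but it buys nothing here: choosing $h_j$ of one sign and positive on an open set makes your Gram matrix automatically definite by the same unique-continuation input, and then the elementary monotonicity argument of the paper suffices. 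The one-sided case and the realization of $h_j$ via the squared distance function are handled correctly and mirror the paper's use of $f_l^2$.
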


\begin{proof}
Each $\Sigma_i$ is $C^q$ by elliptic regularity. We can suppose $\Sigma_j \neq \Sigma_k$ when $j \neq k$. Choose $\delta>0$ such that $B_r(q) \cap \Sigma_j$ is connected for every $j=1, \dots, L$, $0<r\leq \delta$ and $q \in \Sigma_j$. We claim that there exists a point $p \in \Sigma_1 \setminus (\Sigma_2 \cup \cdots \cup \Sigma_L)$.

Pick $x_1 \in \Sigma_1$ arbitrary. If $x_1 \notin \Sigma_2$, set $x_2=x_1$. Suppose $x_1 \in \Sigma_2$. If $B_\delta(x_1)\cap \Sigma_1  \subset \Sigma_2$, then $\Sigma_1=\Sigma_2$ by unique continuation. This is not possible, hence there exists $x_2 \in B_\delta(x_1)\cap \Sigma_1$ but $x_2 \notin \Sigma_2$. In any case, we have found $x_2 \in \Sigma_1 \setminus \Sigma_2$. Suppose we have $x_j \in \Sigma_1 \setminus (\Sigma_2 \cup \dots \cup \Sigma_j)$, $2 \leq j \leq L-1$. If $x_j \notin \Sigma_{j+1}$, set $x_{j+1}=x_j$. Assume $x_j \in \Sigma_{j+1}$, and define $\delta_j = \min \{\delta, \frac12 d(x_j, \Sigma_2 \cup \dots \cup \Sigma_j)\}>0.$ If $B_{\delta_j}(x_j) \cap \Sigma_1 \subset \Sigma_{j+1}$, then $\Sigma_1=\Sigma_{j+1}$ by unique continuation. This is impossible, hence there exists $x_{j+1} \in B_{\delta_j}(x_j) \cap \Sigma_1$ but $x_{j+1} \notin \Sigma_{j+1}$. In any case, we have found $x_{j+1} \in \Sigma_1 \setminus (\Sigma_2 \cup \dots \cup \Sigma_{j+1})$. By induction, we find $x_L \in \Sigma_1 \setminus (\Sigma_2 \cup \cdots \cup \Sigma_L)$.

For similar reasons, there exist $p_l \in \Sigma_l \setminus (\cup_{k\neq l} \Sigma_k)$ for every $l=1, \dots, L$.
Choose $\eta>0$ sufficiently small so that $\eta$ is smaller than the injectivity radius of the manifold and such that $\eta<\frac{1}{4}d_g(p_l, \cup_{k\neq l} \Sigma_k)$ for every $l$. By decreasing $\eta$ if necessary, we can choose for each $l=1, \dots, L$, a $C^q$ function $f_l:B_\eta(p_l) \rightarrow \mathbb{R}$ such that $f_l=0$  and $\langle \nabla f_l, N_l\rangle >0$ on $\Sigma_l \cap B_\eta(p_l)$, where $N_l$ is a local choice of unit normal to $\Sigma_l$. We also choose, for each  $l=1, \dots, L$, a smooth nonnegative function $\varphi_l:M \rightarrow \mathbb{R}$ such that $\varphi_l=1$ on $B_{\eta/2}(p_l)$ and $\varphi_l=0$ outside $B_{2\eta/3}(p_l)$.


Let $g_i = \exp(2\phi_i)g$, where $\phi_i=-\frac{1}{i}(\varphi_1f_1^2 + \cdots + \varphi_Lf_L^2)$. By the arguments  of \cite[Proposition 2.3]{irie-marques-neves}, one can check that at any point $y$ on $\Sigma_l$, $\phi_i=0$, $\nabla\phi_i=0$ and $\Hess_g\phi_i(N,N)= -\frac{2}{i}\varphi_l(y)\langle \nabla f_l, N\rangle^2(y)$, where $N$ is a unit normal to $\Sigma_l$ at $y$ with respect to the metric $g$ (or $g_i$). This implies $\Sigma_l$ remains minimal with respect to  $g_i$ for every $l$, and at  points of $\Sigma_l$ we have:
$$\Ric_{g_i}(N,N)+|A_{\Sigma_l,g_i}|^2_{g_i} = \Ric_{g}(N,N)+|A_{\Sigma_l,g}|^2_{g}+ \frac{2n}{i}\varphi_l\langle \nabla f_l, N\rangle^2,$$
where $|A_{\Sigma_l,g}|$ is the norm of the second fundamental form of $\Sigma_l$ with respect to $g$.

The Jacobi operator of $\Sigma_l$ acting on normal vector fields is given by
$$L_{\Sigma_l,g}(X) = \Delta^\perp_{\Sigma_l,g}X+(\Ric_g(N,N)+|A_{\Sigma_l,g}|^2_g)X.$$
 Since $g_i$ and $g$ coincide on $\Sigma$, $\Delta^\perp_{\Sigma_l,g}=\Delta^\perp_{\Sigma_l,g_i}$ and hence
\begin{equation} \label{shift}
L_{\Sigma_l,g_i}(X)=L_{\Sigma_l,g}(X)+\frac{2n}{i}\varphi_l \langle \nabla f_l, N\rangle^2X.
\end{equation}

Fix $l$, and define $\tilde{L}_t(X)=L_{\Sigma_l,g}(X) + t\varphi_l\langle \nabla f_l, N\rangle^2X$ on $\Sigma_l$, for $t \in \mathbb{R}$. It is known that the eigenvalues of $\tilde{L}_t$ depend continuously on the parameter $t$. Suppose that $\Sigma_l$ is  a degenerate minimal hypersurface in $(M,g)$, and let $Q$ be the unique integer such that $0=\lambda_Q(\tilde{L}_0)<\lambda_{Q+1}(\tilde{L}_0)$. If $t$ is sufficiently small, then $\lambda_{Q+1}(\tilde{L}_t)>0$.

Let $X$ be in the zero eigenspace $E$ of $\tilde{L}_0$, $X \neq0$. Then 
\begin{eqnarray*}
&&\frac{d}{dt}_{|t=0}\left(-\frac{\int_{\Sigma_l} \langle \tilde{L}_t(X),X\rangle}{\int_{\Sigma_l}|X|^2}\right) \\
&&=\frac{d}{dt}_{|t=0}\left( \frac{-\int_{\Sigma_l}\langle \tilde{L}_0(X),X \rangle -t\int_{\Sigma_l} \varphi_l\langle \nabla f_l, N\rangle^2 |X|^2}{\int_{\Sigma_l} |X|^2}\right) \\
 && =  - \frac{\int_{\Sigma_l} \varphi_l\langle \nabla f_l, N\rangle^2 |X|^2}{\int_{\Sigma_l}|X|^2} \\
 &&\leq- \frac{\int_{B_{\eta/2}(p_l) \cap \Sigma_l}\langle \nabla f_l, N\rangle^2|X|^2}{\int_{\Sigma_l}|X|^2}.
\end{eqnarray*}
By unique continuation of solutions of linear elliptic equations and the finite-dimensionality of $E$, we can find a constant $c>0$ such that
\begin{equation}\label{derivative.eigenvalue}
\frac{d}{dt}_{|t=0}\left(-\frac{\int_{\Sigma_l} \langle \tilde{L}_t(X),X\rangle}{\int_{\Sigma_l}|X|^2}\right)  \leq -c
\end{equation}
for every $X \in E\setminus \{0\}$.

Recall the min-max characterization of the eigenvalue $\lambda_Q(\tilde{L}_t)$:
\begin{equation} \label{mmcharacterization}
\lambda_Q(\tilde{L}_t) = \inf_W \max_{X\in W\backslash\{0\}} \frac{-\int_{\Sigma_l} \langle \tilde{L}_t(X),X \rangle}{\int_{\Sigma_{l}} |X|^2},
\end{equation}
where the infimum is taken over all the $Q$-dimensional subspaces $W$ of the space of smooth, normal vector fields on $\Sigma_l$. If $\tilde{W}$ is the subspace spanned by the eigensections of $\tilde{L}_0$ corresponding to eigenvalues $\lambda \leq 0$, then ${\rm dim}(W)=Q$. By combining (\ref{derivative.eigenvalue}) and (\ref{mmcharacterization}), we have
$$
\lambda_Q(\tilde{L}_t) \leq \max_{X\in \tilde{W}\backslash\{0\}} \frac{-\int_{\Sigma_l} \langle \tilde{L}_t(X),X \rangle}{\int_{\Sigma_{l}} |X|^2}\leq -\frac{c}{2}t
$$
for sufficiently small $t \geq 0$. Therefore for sufficiently large $i$ we have both $\lambda_Q(L_{\Sigma_l,g_i})<0$ and $\lambda_{Q+1}(L_{\Sigma_l,g_i})>0$. This implies $\Sigma_l$ is nondegenerate with respect to $(M,g_i)$ for sufficiently large $i$. Since this is true for every $l=1, \dots, L$, the Lemma is proved.

\end{proof}

\section{Proof of the Main Theorem}

Let $g$ be a smooth Riemannian metric on $M$, $K$ be an integer and $\epsilon_1>0$ be a positive constant 
smaller than the injectivity radius of $g$. Let  $\hat{B}_1,\dots,\hat{B}_K$ be disjoint domains in $M$, with piecewise smooth boundary, such that the union of their closures covers $M$.

Let $B_k$ be some neighborhood of $\hat{B}_k$. We suppose that each $B_k$ is contained in 
a geodesic ball of radius $\epsilon_1$. 
Choose also a smooth function $0\leq \phi_k \leq 1$ that is equal to $1$ on $\hat{B}_k$ and with $\spt(\phi_k) \subset B_k$, and a point $q_k\in \hat{B}_k$ for each $k$. We can also suppose that $q_k \notin B_l$ if $l\neq k$. Define the partition of unity $\psi_k=\frac{\phi_k}{\sum_q\phi_q}$. Hence $\psi_k(q_k)=1$ and $\psi_k(q_l)=0$ for $l \neq k$.

For a fixed $k$, let $e$ be a unit vector in the tangent space of $M$ at $q_k$. It determines by parallel transport along geodesics starting at $q_k$ a unit vector field in $B_k$ still denoted by $e$. We define a  nonnegative symmetric $(0,2)$-tensor $h(e)$ on $B_k$ as follows: 
$h(e)(v,w) =
 \langle v,e\rangle_g \langle w,e\rangle_g$.

Now consider the space $\mathcal{B}_k$ of orthonormal bases at $q_k$; these $\mathcal{B}_k$ are endowed with a natural metric 
determined by $g$
and of course are isometric to each other. For each $k$, pick $L$ points $x^k_1,\dots,x^k_L\in\mathcal{B}_k$ such that any point in $\mathcal{B}_k$ is at distance less than 
$\epsilon_1$ to one of the $x^k_l$. Each $x^k_l$ is an orthonormal basis $(x^k_{l,1},...,x^k_{l,n+1})$ at $q_k$ and so we can consider the family of symmetric $(0,2)$-tensors $h^k_{l,j}=h(x^k_{l,j})$. Note that by construction, in  $B_k$, for any $l$ the sum $\sum_{j=1}^{n+1} h^k_{l,j}$ is the metric $g$. 

We denote by $\mathcal{C}_{g, \tilde{K}, \epsilon_1}$  the set of all possible choices $$(K, \{\hat{B}_k\}, \{B_k\}, \{\phi_k\}, \{q_k\}, \{x^k_l\})$$ as above, with $K \geq \tilde{K}$.  
The set $\mathcal{C}_{g, \tilde{K}, \epsilon_1}$ is non-empty, as can be seen by taking a sufficiently fine triangulation of $M$. 

Recall that $\mathcal{V}(g)$ denotes  the set of stationary integral varifolds in $(M,g)$ whose support is an embedded minimal hypersurface. We claim that in order to show the main theorem, it suffices to prove the following property.

\medskip

(\textbf{P}): For any metric $g$, for every $\epsilon_1>0$, $\tilde{K}>0$ and any choice of 
$$S=(K, \{\hat{B}_k\}, \{B_k\}, \{\phi_k\}, \{q_k\}, \{x^k_l\}) \in \mathcal{C}_{g, \tilde{K}, \epsilon_1},$$   there is a metric $\tilde{g}$ arbitrarily close to $g$ in the $C^\infty$ topology such that there are varifolds $V_1,\dots,V_J$ of $\mathcal{V}(\tilde{g})$ 
whose support $\spt(V_j)$ are nondegenerate, and coefficients $\alpha_1,\dots,\alpha_J\in[0,1]$ with $\sum_i \alpha_i =1$ satisfying
\begin{equation} \label{sufficient}
\forall k,l,j \quad \Big| \sum_i \alpha_i \frac{V_i(\psi_k h^k_{l,j})}{||V_i||(M)} - 
\frac{1}{(n+1)}\frac{1}{{\rm vol}_{\tilde{g}}(M)}\int_M \psi_k dv_{\tilde{g}}\Big| < \epsilon_1/K,
\end{equation}
where the terms of the sum are computed for the metric $\tilde{g}$.  Here $$V(h)=\int_{G_n(M)} h(\nu,\nu) dV(p, \pi),$$ where $G_n(M)$ denotes the Grassmannian of $n$-dimensional planes of $M$ and $\nu$ is a unit normal to the $n$-plane $\pi \subset T_pM$.
\medskip

Indeed, let us explain why Property ({\bf P}) implies the main theorem. We denote by $\mathcal{M}(g,\epsilon_1,\tilde{K}, S)$, with $$S=(K, \{\hat{B}_k\}, \{B_k\}, \{\phi_k\}, \{q_k\}, \{x^k_l\})\in \mathcal{C}_{g, \tilde{K}, \epsilon_1},$$ the family of metrics $\tilde{g}\inÊ\Gamma_\infty$ at distance less than $\epsilon_1/K$ to $g$ (computed with respect to $g$) 
in the $C^K$ topology such that there are $\{V_1,\dots,V_J\} \subset \mathcal{V}(\tilde{g})$ 
whose supports are nondegenerate, $\alpha_1,\dots,\alpha_J\in[0,1]$ with $\sum_i \alpha_i =1$, which satisfy (\ref{sufficient}) for all $k, l, j$. If $g' \in \mathcal{M}(g,\epsilon_1,\tilde{K}, S)$, and $\{\Sigma_1', \dots, \Sigma_Q'\}$ is any finite collection of nondegenerate minimal hypersurfaces in $(M,g')$,  then for every metric $\tilde{g}$ that is sufficiently close to $g'$, there is a unique collection $\{\tilde{\Sigma}_1, \dots, \tilde{\Sigma}_Q\}$ of nondegenerate minimal hypersurfaces in $(M,\tilde{g})$ such that $\tilde{\Sigma}_i$ is close to $\Sigma_i'$, $i=1, \dots, Q$. Moreover, $\tilde{\Sigma}_i$ converges smoothly to $\Sigma_i'$ as $\tilde{g}$ converges to $g'$. This implies that $\mathcal{M}(g,\epsilon_1,\tilde{K}, S)$ is open in the $C^\infty$ topology.

Define $$\mathcal{M}(\epsilon_1,\tilde{K}) : = \bigcup_{g\in \Gamma_\infty}\bigcup_{S \in \mathcal{C}_{g, \tilde{K}, \epsilon_1}} \mathcal{M}(g,\epsilon_1,\tilde{K},S).$$
 It is clearly open. Given an arbitrary metric $g \in \Gamma_\infty$, we can choose $S \in  \mathcal{C}_{g, \tilde{K}, \epsilon_1}$.
Property ({\bf P}) implies that the metric $g$ is a limit of metrics in $\mathcal{M}(g,\epsilon_1,\tilde{K},S)$. This shows that $\mathcal{M}(\epsilon_1,\tilde{K})$ is also dense.

Define
$$\mathcal{M}:= \bigcap_{m\in \mathbb{N}} \mathcal{M}(1/m,m).$$
Since each $ \mathcal{M}(1/m,m)$ is open and dense, the intersection $\mathcal{M}$ is a residual subset (in the Baire sense)  of the set of metrics. We will show that for any metric in $\mathcal{M}$, one can find sequences
of minimal hypersurfaces like in the Main Theorem.
For any metric, a symmetric $(0,2)$-tensor $h$ is diagonalizable at every point. The idea is to find a fine subdivision of $M$ in domains $B_k$ where $h$ is approximately diagonal when expressed in the basis $x^k_{l(k)}$ for a certain $l(k)\in\{1,\dots,L\}$. 

Let $\tilde{g} \in \mathcal{M}$. Then $\tilde{g}\in\mathcal{M}(1/m,m)$ for every $m \in \mathbb{N}$. Fix $m$. Then by construction there exists a metric $g$ such that $\tilde{g} \in  \mathcal{M}(g,1/m,m,S)$ for some choice of 
$$S=(K, \{\hat{B}_k\}, \{B_k\}, \{\phi_k\}, \{q_k\}, \{x^k_l\}) \in \mathcal{C}_{g, m, 1/m}.$$ 
In particular, $g$ belongs to a $1/(mK)$-neighborhood of $\tilde{g}$ 
in the $C^K$ topology. We also have $\{V_1,\dots,V_J\} \subset \mathcal{V}(\tilde{g})$, $\alpha_1,\dots,\alpha_J\in[0,1]$ with $\sum_i \alpha_i =1$, which satisfy 
\begin{equation} \label{sufficient2}
\forall k,l,j \quad \Big| \sum_i \alpha_i \frac{V_i(\psi_k h^k_{l,j})}{||V_i||(M)} - 
\frac{1}{(n+1)}\frac{1}{{\rm vol}_{\tilde{g}}(M)}\int_M \psi_k dv_{\tilde{g}}\Big| < 1/(mK).
\end{equation}
Note that $g, S, J, \{V_j\}, \{\alpha_j\}$ all depend on $m$.

Let $h$ be a symmetric $(0,2)$-tensor on $M$. The following computations are done with respect to the metric $\tilde{g}$, unless otherwise specified. We  start by writing 
$$\int_M\Tr(h) = \sum_k \int_{B_k} \psi_k \Tr(h),$$
and
$$\sum_i \alpha_i \frac{V_i(h)}{||V_i||(M)} = \sum_k\sum_i \alpha_i \frac{V_i(\psi_k h)}{||V_i||(M)}. $$

At each $q_k\in \hat{B}_k$, $h$ is diagonalizable 
for the metric $g$ in a $g$-orthonormal basis $$u^k=(u^k_1,\dots,u^k_{n+1})\in\mathcal{B}_k$$ with eigenvalues $\lambda_1(k),\dots,\lambda_{n+1}(k)$
and we note that $\sum_j \lambda_j(k)$ is the trace of $h$ at $q_k$ for the metric $g$. Let $l(k)$ be such that $x^k_{l(k)}$ is at distance less than 
$1/m$ from $u^k$ in $\mathcal{B}_k$.
We get on $B_k$ (which are contained in balls of radius $1/m$) the following estimates with the metric $\tilde{g}$:
\begin{equation}\label{small}
\Big|h - \sum_{j=1}^{n+1} \lambda_j(k) h(u^k_j)\Big|_{\tilde{g}} < \frac{C}{m},
\end{equation}
\begin{equation}\label{small2}
\Big|\sum_{j=1}^{n+1} \lambda_j(k) h(u^k_j) - \sum_{j=1}^{n+1} \lambda_j(k) h^k_{l(k),j}\Big|_{\tilde{g}} < \frac{C}{m}.
\end{equation}
Here $C$ depends only on $\tilde{g}$ and $h$, and might be different from line to line. 

We have, by (\ref{sufficient2}), that
\begin{eqnarray*} 
&&\forall k \quad \Big| \sum_i \sum_j \alpha_i \frac{V_i(\psi_k \lambda_j(k)h^k_{l(k),j})}{||V_i||(M)} - 
\frac{1}{(n+1)}\frac{1}{{\rm vol}(M)}\int_M (\sum_j\lambda_j(k)\psi_k )\Big| \\
&&\hspace{5cm}< C/(mK).
\end{eqnarray*}
Hence
\begin{eqnarray*} \nonumber
&&\sum_k \Big| \sum_i \sum_j \alpha_i \frac{V_i(\psi_k \lambda_j(k)h^k_{l(k),j})}{||V_i||(M)} -
 \frac{1}{(n+1)}\frac{1}{{\rm vol}(M)}\int_M (\sum_j\lambda_j(k)\psi_k) \Big| \\
&&\hspace{5cm}< C/m,
\end{eqnarray*}
and since $|\Tr_g h - \Tr_{\tilde{g}} h| <C/m$, we obtain readily
\begin{eqnarray*} \nonumber
&&\sum_k \Big| \sum_i \sum_j \alpha_i \frac{V_i(\psi_k \lambda_j(k)h^k_{l(k),j})}{||V_i||(M)} - 
\frac{1}{(n+1)}\frac{1}{{\rm vol}(M)}\int_M \psi_k \Tr h\Big| 
< C/m.
\end{eqnarray*}
Therefore
\begin{equation} \nonumber
 \Big| \sum_k \sum_i \sum_j \alpha_i \frac{V_i(\psi_k \lambda_j(k)h^k_{l(k),j})}{||V_i||(M)} - 
 \frac{\int_{M}\Tr h}{(n+1){\rm vol}(M)}\Big| < C/m.
\end{equation}

But we also have by (\ref{small}) and (\ref{small2}) that
\begin{equation} \nonumber
 \Big| \sum_k\sum_i \sum_j \alpha_i \frac{V_i(\psi_k \lambda_j(k)h^k_{l(k),j})}{||V_i||(M)} -  \sum_k\sum_i \sum_j \alpha_i \frac{V_i(\psi_k \lambda_j(k)h(u^k_j))}{||V_i||(M)}\Big| < C/m,
\end{equation}
and
\begin{equation} \nonumber
 \Big|  \sum_k\sum_i \sum_j \alpha_i \frac{V_i(\psi_k \lambda_j(k)h(u^k_j))}{||V_i||(M)}- \sum_i \alpha_i \frac{V_i(h)}{||V_i||(M)}\Big| < C/m,
\end{equation}
so we conclude 
\begin{equation} \nonumber
 \Big| \sum_i \alpha_i \frac{V_i(h)}{||V_i||(M)} - \frac{\int_{M}\Tr h}{(n+1){\rm vol}(M)}\Big| < C/m.
\end{equation}

In the paragraph that follows, all the integrals, traces and varifold values are computed with $\tilde{g}$. Each $V_i=V_{m,i}$, $i=1, \dots, J_m=J$, is of the form
$$
V_i = \sum_{q=1}^{R_{m,i}} \Sigma_{m, i,q},
$$
with $R_{m,i} \in \mathbb{N}$, $\Sigma_{m,i,q}$ a connected, closed, smooth, embedded, minimal hypersurface of $(M,\tilde{g})$. Choose integers $c_{m,i}, d_m\in \mathbb{N}$ such that $\alpha_i=\alpha_{m,i}$ satisfies
$$
|\frac{\alpha_{m,i}}{||V_{m,i}||(M)}-\frac{c_{m,i}}{d_m}|< \frac{1}{mJ_m||V_{m,i}||(M)}.
$$
In particular, $|1-\sum_{i=1}^{J_m}\frac{c_{m,i}||V_{m,i}||(M)}{d_m}|<1/m$ and 
\begin{equation} \nonumber
 \Big| \sum_i \frac{c_{m,i}}{d_m} V_{m,i}(h) - \frac{\int_{M}\Tr h}{(n+1){\rm vol}(M)}\Big| < C/m.
\end{equation}
Hence
\begin{equation}\label{for.symmetric}
\lim_{m\rightarrow \infty} \frac{\sum_{i=1}^{J_m} c_{m,i}V_{m,i}(h)}{\sum_{i=1}^{J_m}c_{m,i}||V_{m,i}||(M)}  =\frac{\int_{M}\Tr h}{(n+1){\rm vol}(M)}
\end{equation}
for any symmetric $(0,2)$-tensor $h$. If we choose $h=f\cdot \tilde{g}$, with $f \in C^\infty(M)$, we get
\begin{equation} \label{for.function}
\lim_{m\rightarrow \infty} \frac{\sum_{i=1}^{J_m}\sum_{q=1}^{R_{m,i}}  c_{m,i}\int_{\Sigma_{m, i,q}} f}{\sum_{i=1}^{J_m} \sum_{q=1}^{R_{m,i}}c_{m,i}{\rm vol}(\Sigma_{m,i,q})}  =\frac{\int_{M}f}{{\rm vol}(M)}.
\end{equation}
Because 
$$V_{m,i}(h)=\sum_{q=1}^{R_{m,i}}\int_{\Sigma_{m,i,q}}h(\nu,\nu) = \sum_{q=1}^{R_{m,i}}\int_{\Sigma_{m,i,q}} (\Tr h- \Tr_{\Sigma_{m,i,q}}h),$$
we can combine (\ref{for.symmetric}) with (\ref{for.function}) to conclude
$$
\lim_{m\rightarrow \infty} \frac{\sum_{i=1}^{J_m}\sum_{q=1}^{R_{m,i}}  c_{m,i}\int_{\Sigma_{m, i,q}} \Tr_{\Sigma_{m,i,q}}h}{\sum_{i=1}^{J_m} \sum_{q=1}^{R_{m,i}}c_{m,i}{\rm vol}(\Sigma_{m,i,q})}  =\frac{n\int_{M}\Tr h}{(n+1){\rm vol}(M)}.
$$

In other words, we just proved that, assuming Property ({\bf P}), one can find for a generic metric a sequence of finite lists of closed, embedded, connected minimal hypersurfaces $\big\{\Sigma_{N,1}, \dots, \Sigma_{N, P_N}\big\}_{N\in\mathbb{N}}$ such that the following is true: if we denote $\int_{\Sigma_{N,i}} \Tr_{\Sigma_{N,i}} (h) \, d\Sigma_{N,i}$ (resp. $ {\rm vol}(\Sigma_{N,i}) $) by $X_{N,i}$ (resp. $\bar{X}_{N,i}$), then
\begin{equation} \label{group conv}
|\frac{\sum_{i=1}^{P_N} X_{N,i}}{\sum_{i=1}^{P_N} \bar{X}_{N,i}}-\alpha|\leq \varepsilon_N,
\end{equation}
where $\alpha=\frac{1}{{\rm vol}(M)}\int_M \frac{n\Tr_M h}{n+1} dM$ and $\lim_{N \rightarrow \infty}\varepsilon_N=0$.
From the numbers $X_{N,i}$, $\bar{X}_{N,i}$, we want to construct two sequences $\{Y_j\}_{j\in\mathbb{N}}$ and $\{\bar{Y}_j\}_{j\in\mathbb{N}}$ 
such that 
\begin{itemize}
\item
for all $j$, there exist integers $N(j)$, $i(j)$ (chosen independently of h) with $Y_j = X_{N(j),i(j)}$ and $\bar{Y}_j = \bar{X}_{N(j),i(j)}$,
\item moreover $$\lim_{q \rightarrow \infty} \frac{\sum_{j=1}^{q} Y_j}{\sum_{j=1}^{q} \bar{Y}_{j}}=\alpha.$$
\end{itemize}

Note first that all the $\bar{X}_{N,i}$ are bounded below by a uniform positive constant $v$, according to the monotonicity formula, and that $|X_{N,i}|\leq C(h) \bar{X}_{N,i}$ where $C(h)$ is the maximum value that the absolute value of the trace of $h$ can take over the Grassmannian $G_n(M)$. 

Let $\{Q_N\}_{N\in \mathbb{N}}$ be a sequence of positive integers that will be chosen in the following order: $Q_1$ is chosen depending on 
$\{\Sigma_{1,i}\}$ and $\{\Sigma_{2,i}\}$, $Q_2$ is chosen depending on $Q_1$, $\{\Sigma_{1,i}\}$, $\{\Sigma_{2,i}\}$, $\{\Sigma_{3,i}\}$, and similarly $Q_{N_0}$ is chosen depending on $\{Q_1, \dots, Q_{N_0-1}\}$, $\{\Sigma_{1,i}\}$, $\{\Sigma_{2,i}\}$, $\dots \{\Sigma_{N_0+1,i}\}$.

  If $1\leq j \leq Q_1P_1$, write
$j=kP_1+l$ where $k\in \{0, \dots, Q_1-1\}$ and $l\in \{1,\dots,P_1\}$. Then define $Y_j=X_{1,l}$ and $\bar{Y}_j=\bar{X}_{1,l}$ accordingly. Notice that
\begin{eqnarray*}
&&|\frac{\sum_{j=1}^{kP_1+l}Y_j}{\sum_{j=1}^{kP_1+l}\overline Y_j}-\alpha|\\
&&\leq |\frac{k(\sum_{i=1}^{P_1} X_{1,i}-\alpha \sum_{i=1}^{P_1} \overline X_{1,i}) + (\sum_{i=1}^l X_{1,i}-\alpha\sum_{i=1}^l \overline X_{1,i})}{ k\sum_{i=1}^{P_1} \overline X_{1,i}+ \sum_{i=1}^l \overline X_{1,i}}|\\
&&\leq \varepsilon_1+C(h)+|\alpha|,
\end{eqnarray*}
while
\begin{eqnarray*}
&&|\frac{\sum_{j=1}^{Q_1P_1}Y_j}{\sum_{j=1}^{Q_1P_1}\overline Y_j}-\alpha|\leq \varepsilon_1.\\
\end{eqnarray*}

If $Q_1P_1+1 \leq j \leq Q_1P_1+Q_2P_2$, we write $j=Q_1P_1+kP_2+l$ where $k\in \{0, \dots, Q_2-1\}$ and $l\in \{1,\dots,P_2\}$. Then define $Y_j=X_{2,l}$  and $\bar{Y}_j=\bar{X}_{2,l}$ accordingly. Now
\begin{eqnarray*}
&&|\frac{\sum_{j=1}^{Q_1P_1+kP_2+l}Y_j}{\sum_{j=1}^{Q_1P_1+kP_2+l}\overline Y_j}-\alpha|\\
&&\leq \big| \big(Q_1(\sum_{i=1}^{P_1} X_{1,i}-\alpha \sum_{i=1}^{P_1} \overline X_{1,i}) + k(\sum_{i=1}^{P_2} X_{2,i}-\alpha \sum_{i=1}^{P_2} \overline X_{2,i})\\
&&\hspace{3cm} + (\sum_{i=1}^l X_{2,i}-\alpha\sum_{i=1}^l \overline X_{2,i})\big)\big|\\
&&\hspace{1cm}\cdot \frac{1}{\big(Q_1\sum_{i=1}^{P_1} \overline X_{1,i}+k\sum_{i=1}^{P_2} \overline X_{2,i}+ \sum_{i=1}^l \overline X_{2,i}\big)}\\
&&\leq \varepsilon_1+\varepsilon_2+\frac{C(h)+|\alpha|}{Q_1P_1v} \sum_{i=1}^{P_2} \overline X_{2,i}\\
&&\leq \varepsilon_1+\varepsilon_2+ (C(h)+|\alpha|)\varepsilon_2,
\end{eqnarray*}
if $Q_1$ is sufficiently large depending on $\{\Sigma_{1,i}\}$ and $\{\Sigma_{2,i}\}$,
while
\begin{eqnarray*}
&&|\frac{\sum_{j=1}^{Q_1P_1+Q_2P_2}Y_j}{\sum_{j=1}^{Q_1P_1+Q_2P_2}\overline Y_j}-\alpha|\leq 2\varepsilon_2,\\
\end{eqnarray*}
if $Q_2$ is sufficiently large depending on $Q_1$, $\{\Sigma_{1,i}\}$ and $\{\Sigma_{2,i}\}$.

Proceeding this way we get a sequence $\{Q_N\}$ and a sequence $\{Y_j\}$ defined so that if $1+ \sum_{N=1}^{N=N_0} Q_NP_N\leq j \leq \sum_{N=1}^{N=N_0+1} Q_NP_N$, we write $j=\sum_{N=1}^{N=N_0} Q_NP_N+kP_{N_0+1}+l$, where $k \in \{0, \dots, Q_{N_0+1}-1\}$ and 
$l \in \{1, \dots, P_{N_0+1}\}$, and set $Y_j=\Sigma_{N_0+1,l}$, $\bar{Y}_j=\bar{X}_{N_0+1,l}$. We will have
\begin{eqnarray*}
&&|\frac{\sum_{j=1}^{\sum_{N=1}^{N=N_0} Q_NP_N+kP_{N_0+1}+l}Y_j}{\sum_{j=1}^{\sum_{N=1}^{N=N_0} Q_NP_N+kP_{N_0+1}+l}\overline Y_j}-\alpha|\leq 2\varepsilon_{N_0}+\varepsilon_{N_0+1}+(C(h)+|\alpha|)\varepsilon_{N_0+1},
\end{eqnarray*}
and
\begin{eqnarray*}
&&|\frac{\sum_{j=1}^{\sum_{N=1}^{N=N_0+1} Q_NP_N}Y_j}{\sum_{j=1}^{\sum_{N=1}^{N=N_0+1} Q_NP_N}\overline Y_j}-\alpha|\leq 2\varepsilon_{N_0+1}.
\end{eqnarray*}

This implies
 $$\lim_{q \rightarrow \infty} \frac{\sum_{j=1}^{q} Y_j}{\sum_{j=1}^{q} \overline{Y}_{j}}=\alpha$$
 for any $h$, and we are done.

\medskip

{\it Proof of the Property} ({\bf P)}: Let $g$ be a smooth Riemannian metric, $\epsilon_1>0$ and $\tilde{K}>0$ be constants, and choose 
$$S=(K, \{\hat{B}_k\}, \{B_k\}, \{\phi_k\}, \{q_k\}, \{x^k_l\}) \in \mathcal{C}_{g, \tilde{K}, \epsilon_1}.$$  

Let $\mathcal{U}$ be a $C^\infty$ neighborhood of $g$.
Let $N=KL(n+1)$. Choose $\epsilon'>0$ sufficiently small and $q\geq N+3$ sufficiently large so that if $g'\in \Gamma_\infty$ satisfies $||g-g'||_{C^q}<\epsilon'$, then $g'\in \mathcal{U}$. For each $k$, $l$, $j$ we associate a variable $t_{k,l,j}\in[0,1]$ and we order them by lexicographical order on the indices. We can find a smooth $(0,2)$-tensor $\bar{h}^k_{l,j}$ so that $||\bar{h}^k_{l,j}-h^k_{l,j}||_{C^q}<\epsilon'$ and such that
$\{\bar{h}^k_{l,j}\}_{l,j}$ is linearly independent in a neighborhood of $q_k$ where $\phi_k$ is equal to 1 and $\phi_{k'}$ is zero for $k'\neq k$.

Consider the following $N$-parameter family of metrics. For a $t=(t_{k,l,j})\in [0,1]^N$, we define 
$$\hat{g}(t)= g+2\sum_{k,l,j} \psi_k t_{k,l,j} \bar{h}^k_{l,j}.$$ 
As $t$ goes to zero, we have the following expansion
\begin{eqnarray} \label{taylor exp}
&&  {\rm vol}(M,\hat{g}(t))^{\frac{n}{n+1}} =    {\rm vol}_g(M)^{\frac{n}{n+1}} 
 \\
  &&+ \frac{n}{(n+1)}
  {{\rm vol}_g(M)^{-\frac{1}{n+1}}}\sum_{k,l,j} t_{k,l,j}\int_M \psi_k dv_g
+ o(||t||_1) + O(\epsilon'||t||_1),\nonumber
\end{eqnarray}
where $||t||_1=\sum_{k,l,j} |t_{k,l,j}|$.
Also
$$
 \frac{\partial}{\partial t_{k,l,j}} {\rm vol}(M,\hat{g}(t)) = \int_M \psi_k \Tr_{\hat{g}(t)}(\bar{h}^k_{l,j}) dv_{\hat{g}(t)}
 =\int_M \psi_k dv_g + o(1)+O(\epsilon').$$
 
 We will say that a function $f:[0,\delta]^N\to \mathbb{R}$ is $\epsilon'$-close to another function $g:[0,\delta]^N\to \mathbb{R}$ if, when appropriately rescaled to be functions defined on $[0,1]^N$, they are at distance less than $\epsilon'$ in the $L^\infty$ norm, i.e. $$||\frac{1}{\delta}f_\delta-\frac{1}{\delta}g_\delta||_\infty<\epsilon'$$
with $f_\delta(s)= f(\delta s)$ and $g_\delta(s)=g(\delta s)$. By (\ref{taylor exp}), the function
$$f_0(t) : = \frac{{\rm vol}(M,\hat{g}(t))^{n/(n+1)}}{{\rm vol}_g(M)^{n/(n+1)}} - \frac{n}{(n+1)}\frac{1}{{\rm vol}_g(M)}\sum_{k,l,j} t_{k,l,j}\int_M \psi_k dv_g$$
is $C\epsilon'$-close to the constant function equal to $1$ on $[0,\delta]^N$, where $C=C(g)$ depends only on $g$ and might differ from line to line, if $\delta$ is sufficiently small.

 If $\delta>0$ is sufficiently small, we also have that $\hat{g}:[0,\delta]^N \rightarrow \Gamma_q$ is an embedding and $||\hat{g}(t)-g||_{C^q}<\epsilon'/2$ for every $t\in [0,\delta]^N$. We can slightly perturb $\hat{g}$ in the $C^\infty$ topology  into a $C^\infty$ map $g':[0,\delta]^N \rightarrow \Gamma_q$ so that the conclusion of Lemma \ref{differential} is satisfied. In particular, we can assume $||g'(t)-\hat{g}(t)||_{C^q}<\epsilon'/4$ and $||\frac{\partial g'}{\partial v}(t)-\frac{\partial \hat{g}}{\partial v}(t)||_{C^q} < \epsilon'/4$ for any $t \in [0,\delta]^N$ and $v\in \mathbb{R}^N$, $|v|=1$, and the function
 $$f_1(t) : = \frac{{\rm vol}(M,g'(t))^{n/(n+1)}}{{\rm vol}_g(M)^{n/(n+1)}} - \frac{n}{(n+1)}\frac{1}{{\rm vol}_g(M)}\sum_{k,l,j} t_{k,l,j}\int_M \psi_k dv_g$$
is $C\epsilon'$-close to the constant function equal to $1$ on $[0,\delta]^N$.

The normalized widths $p^{-\frac{1}{n+1}}\omega_p(g'(t))$ of $g'(t)$ ($t\in[0,\delta]^N$) are uniformly Lipschitz continuous on $[0,\delta]^N$ by Lemma \ref{lipschitz}. Hence, by the Weyl Law for the Volume Spectrum (\cite{liokumovich-marques-neves}), the functions $t \mapsto p^{-\frac{1}{n+1}}\omega_p(g'(t))$ converge uniformly to the function $t \mapsto a(n)\Vol(M,g'(t))^{\frac{n}{n+1}}$.  Hence if 
 $p$ is sufficiently large, $|p^{-\frac{1}{n+1}}\omega_p(g'(t)) - a(n)\Vol(M,g'(t))^{\frac{n}{n+1}}|<\delta\epsilon'$ and the function 
$$f_2(t) : = \frac{\omega_p(g'(t))}{a(n){\rm vol}_g(M)^{n/(n+1)}p^{1/(n+1)}} - \frac{n}{(n+1)}\frac{1}{{\rm vol}_g(M)}\sum_{k,l,j} t_{k,l,j}\int_M \psi_k dv_g$$
is $C\epsilon'$-close to the constant function equal to $1$ on $[0,\delta]^N$.

   

Then at each $t\in \mathcal{A}$ (where $\mathcal{A}$ is given by Lemma \ref{differential}), there is a varifold $V\in\mathcal{V}(g'(t))$ with support a minimal hypersurface $\Sigma$ such that 
\begin{align} \label{koala}
\begin{split}
\frac{\partial}{\partial t_{k,l,j}} p^{-\frac{1}{n+1}}\omega_p({g'}(t))=& p^{-\frac{1}{n+1}} \frac{\partial}{\partial t_{k,l,j}} ||V||(M,g'(t)) \\
=&p^{-\frac{1}{n+1}}||V||(\psi_k \Tr_\Sigma({\bar{h}^k_{l,j}}))+O(\epsilon')
\\
=&p^{-\frac{1}{n+1}}\big(||V||(\psi_k\Tr_{M,g'(t)}\bar{h}^k_{l,j} ) -V(\psi_k{\bar{h}^k_{l,j}}) \big)+ O(\epsilon')\\
=&p^{-\frac{1}{n+1}}\big(||V||(\psi_k) -V(\psi_k{h^k_{l,j}}) \big)+O(\epsilon'),
\end{split}
\end{align}
where $||V||(.)$, $V(.)$ and the traces are computed with respect to $g'(t)$. 

Given $\eta>0$, we can choose $0<\epsilon'<\eta$ sufficiently small compared to $C=C(g)$
so that we can apply Lemma \ref{maximum point}  to $f_2$ and to $\mathcal{A}$. We  get  sequences of points  $\{y_{1,m}\}_m, \dots, \{y_{N+1,m}\}$ in $\mathcal{A}$ converging to a common limit $y \in (0,\delta)^N$ such that
the gradients $\nabla f_2(y_{i,m})$ converge to $N+1$ vectors $v_1,\dots,v_{N+1}$ with
$$
d_{\mathbb{R}^N}\left(0, \Conv(v_1,\dots,v_{N+1})\right) < \eta.
$$

Let $\{\alpha_1, \dots, \alpha_{N+1}\} \subset [0,1]$ with $\sum_{i=1}^{N+1} \alpha_i=1$ such that $|\alpha_1v_1 + \cdots + \alpha_{N+1}v_{N+1}|< \eta$. Then for sufficiently large $m$, we have
$$
|\alpha_1\nabla f_2(y_{1,m}) + \cdots + \alpha_{N+1} \nabla f_2(y_{N+1,m})|< \eta,
$$
and hence
\begin{eqnarray}\label{convex.distance}
|\alpha_1\frac{\partial f_2}{\partial t_{k,l,j}}(y_{1,m}) + \cdots + \alpha_{N+1}\frac{\partial f_2}{\partial t_{k,l,j}}(y_{N+1,m})|< \eta
\end{eqnarray}
for all $k,l,j$.

According to Lemma \ref{differential}, each gradient based at $y_{i,m}$ corresponds to a varifold of mass $\omega_p(g'(y_{i,m}))$ whose support is a minimal hypersurface in $(M,g'(y_{i,m}))$ of index bounded by $p$. Hence for all $i$, by Sharp's Compactness Theorem (\cite{sharp}) a subsequence in $m$ of these varifolds converges to a varifold of $\mathcal{V}(g'(y))$ whose mass is $\omega_p(g'(y))$. By 
(\ref{koala}) and (\ref{convex.distance}),  we have $N+1$ varifolds $V_i$ in $\mathcal{V}(g'(y))$ such that
$$\quad \Big|\sum_i \alpha_i  
\frac{||V_i||(\psi_k)-V_i(\psi_kh^k_{l,j})}{a(n){\rm vol}_g(M)^{n/(n+1)}p^{1/(n+1)}} - \frac{n}{(n+1)}\frac{1}{{\rm vol}_g(M)}\int_M \psi_k dv_g\Big| < C\eta$$
for all $k,l,j$, where $||V_i||(.)$ and $V_i(.)$ are computed with respect to $g'(y)$. This implies
$$\forall k,l,j \quad \Big|\sum_i \alpha_i  
\frac{||V_i||(\psi_k)-V_i(\psi_kh^k_{l,j})}{||V_i||(M)} - \frac{n}{(n+1)}\frac{1}{{\rm vol}_g(M)}\int_M \psi_k dv_g\Big| < C\eta.$$ We also have that   for all $i$, $k$, $l$, one has
$$\Big| \sum_{j=1}^{n+1} V_i(\psi_kh^k_{l,j})-||V_i||(\psi_k)\Big| = \Big|V_i(\psi_k g) - ||V_i||(\psi_k)\Big|< C\eta||V_i||(M),
$$ 
$$\text{ and } \Big| \frac{1}{(n+1)}\frac{1}{{\rm vol}_g(M)}\int_M \psi_k dv_g - \frac{1}{(n+1)}\frac{1}{{\rm vol}_{g'(y)}(M)}\int_M \psi_k dv_{g'(y)} \Big|<C\eta.$$

We deduce the following:
\begin{equation} \label{10eta}
\forall k,l,j \quad \Big|\sum_i \alpha_i  \frac{V_i(\psi_kh^k_{l,j})}{||V_i||(M)} - 
\frac{1}{(n+1)}\frac{1}{{\rm vol}_{g'(y)}(M)}\int_M \psi_k dv_{g'(y)}\Big| < 
C\eta.
\end{equation}

The metric $g'(y)\in \Gamma_q$ satisfies $||g'(y)-g||_{C^q}<3\epsilon'/4$. We apply Lemma \ref{make hypersurface nondegenerate again} to $\bigcup_{i=1}^{N+1}\spt(V_i)$ and find a $C^q$ metric $\overline{g}$ such that $||\overline{g}-g||_{C^q}<4\epsilon'/5$,  each $\spt(V_i)$ is nondegenerate minimal with respect to $\overline{g}$ and (\ref{10eta}) is still valid with $g'(y)$ replaced by $\overline{g}$. If $\tilde{g}$ is a $C^\infty$ metric that is sufficiently close to $\overline{g}$ in the $C^q$ topology, then $\tilde{g} \in \mathcal{U}$. Because of the nondegeneracy of $\spt{V_i}$ with respect to $\overline{g}$ and the Implicit Function Theorem, we can also assure that there are varifolds $V_1,\dots,V_J$ of $\mathcal{V}(\tilde{g})$ 
whose support $\spt(V_j)$ are nondegenerate, and coefficients $\alpha_1,\dots,\alpha_J\in[0,1]$ with $\sum_i \alpha_i =1$ satisfying
\begin{equation} \label{10eta.2}
\forall k,l,j \quad \Big| \sum_i \alpha_i \frac{V_i(\psi_k h^k_{l,j})}{||V_i||(M)} - 
\frac{1}{(n+1)}\frac{1}{{\rm vol}_{\tilde{g}}(M)}\int_M \psi_k dv_{\tilde{g}}\Big| < C\eta,
\end{equation}
where the terms of the sum are computed for the metric $\tilde{g}$. Since $\eta$ is arbitrarily small, we have proved  Property ({\bf P}).



\bibliographystyle{amsbook}

\begin{thebibliography}{99}

\bibitem{almgren-varifolds}
F. Almgren, \textit{The theory of varifolds,} Mimeographed notes, Princeton (1965).


\bibitem{bowen}
R. Bowen,
\textit{The equidistribution of closed geodesics,}
Amer. J. Math. \textbf{94} (1972), 413--423. 



\bibitem{eskin-hee-oh}
Eskin, A. and Oh, H.,
\textit{Ergodic theoretic proof of equidistribution of Hecke points.}
Ergodic Theory Dynam. Systems \textbf{26} (2006), no. 1, 163--167. 





\bibitem{gromov0} M. Gromov, \textit{Dimension, nonlinear spectra and width,} Geometric aspects of functional analysis,(1986/87), 132--184, 
Lecture Notes in Math., 1317, Springer, Berlin, 1988. 

\bibitem{gromov} M. Gromov, \textit{Isoperimetry of waists and concentration of maps,} Geom. Funct. Anal. \textbf{13} (2003), 178--215.

\bibitem{guth} L. Guth, \textit{Minimax problems related to cup powers and Steenrod squares,}  
Geom. Funct. Anal. \textbf{18} (2009), 1917--1987. 

\bibitem{irie}
K. Irie,
\textit{Dense existence of periodic Reeb orbits and ECH spectral invariants,}
J. Mod. Dyn. \textbf{9} (2015), 357--363.


\bibitem{irie-marques-neves}
Irie, K., Marques, F. C., Neves, A.,
\textit{Density of minimal hypersurfaces for generic metrics,}
	Ann. of Math. \textbf{187} 3 (2018), 963--972.



\bibitem{liokumovich-marques-neves}
Liokumovich, Y., Marques, F.C., Neves, A.,
\textit{Weyl law for the volume spectrum},
	Ann. of Math. \textbf{187} 3 (2018), 933--961.
	
\bibitem{margulis}G. Margulis,
{\em On some aspects of the theory of Anosov systems.}
With a survey by Richard Sharp: Periodic orbits of hyperbolic flows. Translated from the Russian by Valentina Vladimirovna Szulikowska. Springer Monographs in Mathematics. Springer-Verlag, Berlin, 2004.



\bibitem{marques-neves-index}
F. C. Marques and A. Neves,
\textit{Morse index and multiplicity of min-max minimal hypersurfaces,}
Camb. J. Math. \textbf{4} (2016), no. 4, 463--511. 

\bibitem{marques-neves-topology}
F. C. Marques and A. Neves,
\textit{Topology of the space of cycles and existence of minimal varieties,} 
 Surv. Differ. Geom., 21, Int. Press, Somerville, MA, (2016),   165--177,

\bibitem{marques-neves-infinitely}
F. C. Marques and A. Neves,
\textit{Existence of infinitely many minimal hypersurfaces in positive Ricci curvature,}
Invent. Math. \textbf{209} (2017), no.2, 577--616.

\bibitem{mcmullen-mohammadi-oh}
C. McMullen,  A. Mohammadi, Amir and H. Oh, 
\textit{Geodesic planes in hyperbolic 3-manifolds,} 
Invent. Math. 209 (2017), no. 2, 425--461. 

\bibitem{mozes-shah}
S. Mozes, N. Shah,
\textit{On the space of ergodic invariant measures of unipotent flows,}
Ergodic Theory Dynam. Systems 15 (1995), no. 1, 149--159.

\bibitem{pitts} 
J. Pitts, \textit{Existence and regularity of minimal surfaces on Riemannian manifolds,} Mathematical Notes 27, Princeton University Press, Princeton, (1981).

\bibitem{ratner} M. Ratner,
{\em Raghunathan's topological conjecture and distributions of unipotent flows.}
Duke Math. J. 63 (1991), 235--280.




\bibitem{schoen-simon} 
R. Schoen and L. Simon, \textit{Regularity of stable minimal hypersurfaces.} 
Comm. Pure Appl. Math. \textbf{34} (1981), 741--797. 

\bibitem{shah}
N.  Shah, {\em Closures of totally geodesic immersions in manifolds of constant negative curvature.} Group theory from a geometrical viewpoint (Trieste, 1990), 718--732, World Sci. Publ., River Edge, NJ, 1991. 

\bibitem{sharp}
B. Sharp,
\textit{Compactness of minimal hypersurfaces with bounded index,} 
J. Differential Geom. \textbf{106} (2017), no. 2, 317--339. 

\bibitem{smale}
S. Smale,
\textit{An infinite dimensional version of Sard's theorem,}
Amer. J. Math. \textbf{87} 1965 861--866. 

\bibitem{song-infinitely-many}
Song, A., \textit{Existence of infinitely many minimal hypersurfaces in closed manifolds,}
	arXiv:1806.08816 [math.DG] (2018)


\bibitem{white2}
B. White, \textit{The space of minimal submanifolds for varying Riemannian metrics,} Indiana Univ. Math. J. \textbf{40} (1991), 161--200.

\bibitem{white}
B. White,
\textit{On the bumpy metrics theorem for minimal submanifolds,}
Amer. J. Math. \textbf{139} (2017), no. 4, 1149--1155.

\bibitem{yau1} S.-T. Yau, \textit{
Problem section.} Seminar on Differential Geometry, pp. 669Ð706, 
Ann. of Math. Stud., 102, Princeton Univ. Press, Princeton, N.J., 1982.

\bibitem{zelditch}
S. Zelditch, 
\textit{Trace formula for compact $\Gamma \setminus  PSL2(\mathbb{R})$ and the equidistribution theory of closed geodesics,}
Duke Math. J. 59 (1989), no. 1, 27--81. 



\end{thebibliography}

\end{document}